\newtheorem{theorem}{Theorem $\!\!$}[section]
\newtheorem{lemma}[theorem]{Lemma $\!\!$}
\newtheorem{remark}[theorem]{Remark $\!\!$}
\newtheorem{corollary}[theorem]{Corollary $\!\!$}
\title[Uniquely hamiltonian graphs]{ Uniquely hamiltonian graphs for many sets of degrees }
\author[G. Brinkmann and M. De Pauw]{Gunnar Brinkmann \and Matthias De Pauw }
\affiliation{Ghent University, Ghent, Belgium}
\keywords{graph, hamiltonian cycle, degree}
\begin{document}

\publicationdata{vol. 26:3}{2024}{7}{10.46298/dmtcs.13129}{2024-02-27; 2024-02-27; 2024-11-06}{2024-09-07}

\date{}

\maketitle

\begin{abstract}

  \bigskip

  We give constructive proofs for the existence of uniquely hamiltonian graphs for various sets of degrees. We give constructions for all sets with minimum $2$
  (a trivial case added for completeness), all sets with minimum $3$ that contain an even number (for sets without an even number it is known that no uniquely
  hamiltonian graphs exist), and all sets with minimum $4$, except $\{4\}, \{4,5\}$, and $\{4,6\}$. For minimum degree
  $3$ and $4$, the constructions also give $3$-connected graphs.

  We also introduce the concept of seeds, which makes the above results possible and might be useful in the study of Sheehan's conjecture. Furthermore, we
  prove that $3$-connected
  uniquely hamiltonian $4$-regular graphs exist if and only if $2$-connected
  uniquely hamiltonian $4$-regular graphs exist. 
  \end{abstract}

\section{Introduction}

The most important problem for hamiltonian cycles is of course which properties guarantee the existence of a hamiltonian cycle, but as soon as the existence of
a hamiltonian cycle is known, the question arises how many hamiltonian cycles exist. In \cite{fewHC}, recent results and an overview of older results on graphs
with few hamiltonian cycles are given. The extremal case is when a graph contains a single hamiltonian cycle, that is: it is {\em uniquely hamiltonian}.  A
crucial role for the existence of a uniquely hamiltonian graph is played by the combination of vertex degrees present in the graph. Already in 1946 Tutte
reported a result by Smith that uniquely hamiltonian cubic graphs don't exist \cite{nocubicUH}.  A long standing conjecture by Sheehan \cite{sheehan} states
that this should in fact be the case for all $d$-regular graphs with $d>2$.  The result by Smith was later improved by Thomason \cite{Thomason_odd} showing that
uniquely hamiltonian graphs where all vertices have odd degree don't exist.  In \cite{noUH23} it is shown that no $d$-regular uniquely hamiltonian graphs exist
if $d\ge 23$.  So while there are e.g.\ neither uniquely hamiltonian graphs with all degrees 3 nor with all degrees 24, a special case of what we will prove
will be that there are uniquely hamiltonian graphs if both these vertex degrees are allowed. For even $d$ with $4\le d \le 22$ it is not known whether
$d$-regular uniquely hamiltonian graphs exist.  In \cite{4_14} Fleischner shows that there are uniquely hamiltonian graphs with minimum degree 4. He constructs
graphs with vertices of degree $4$ and $14$ and graphs where {\em the maximum degree can grow even larger} -- without specifying which degrees can occur. We
will use an improved version of his method to prove that for all sets $M$ with minimum $4$, except maybe for $\{4\}, \{4,5\}$, and $\{4,6\}$,
uniquely hamiltonian graphs exist, so that the set of vertex degrees is exactly $M$.  Furthermore we characterize sets of degrees with minimum $2$ or $3$ for
which uniquely hamiltonian graphs exist completely.

The term {\em graph} always refers to a simple undirected graph, that is: without multiple edges and without loops. If multiple edges are allowed, we use the term {\em multigraph}. Loops
are never allowed, as they are trivial in the context of uniquely hamiltonian graphs.

We define the degree set $M_{deg}(G)$ of a graph (or multigraph) $G$ with vertex set $V$ as  $M_{deg}(G)=\{\deg(v)\mbox{ } |  v\in V\}$.

For a set $M=\{d_0,d_1,d_2,\dots ,d_k\}$ with $d_0< d_1< \dots < d_k$, we say that a $2$-connected (if $2\in M$), resp.\ $3$-connected (otherwise)
uniquely hamiltonian graph $G$ {\em realizes}
$M$ if $M_{deg}(G)=M$. If such a $G$ exists, we define $M$ to be {\em uhc-realizable}.

Next to the question whether a set $M$ is uhc-realizable, it is also interesting which role is played by the larger degrees. Our emphasis is on the smallest
degree $d_0$ and we want to know whether the number of times that the degrees $d_1,\dots ,d_k$ occur can be bounded by a constant even for very large graphs, so
that the average degree can be arbitrarily close to the smallest degree. On the other hand it might also be interesting to know, whether the larger degrees can
occur an unbounded number of times and maybe also occur for at least a fixed fraction of the vertices also in arbitrarily large graphs.  The average degree
would in that case be bounded from below by the minimum degree times a constant factor $c>1$.  The strongest requirement is, if both can occur and even in
combination depending on the $d_i$. We formalize that by the following definition:

For a set $M=\{d_0,d_1,d_2,\dots ,d_k\}$ with $k>0$, $d_0< d_1< \dots < d_k$ we say that $M$ is {\em strongly uhc-realizable}, if for
each partition $D_1, D_2$ of $\{d_1,\dots,d_k\}$ (with one of $D_1,D_2$ possibly empty) there are constants
  $c_1\in \mathbb{N},c_2\in \mathbb{R}$, $c_2>0$, and an infinite sequence of graphs $G_i=(V_i,E_i)$ realizing $M$, so that for all $d\in D_1$ each $G_i$ has at most
$c_1$ vertices of degree $d$, and for each $d'\in D_2$ each $G_i$ has at least $c_2|V_i|$ vertices of degree $d'$.

\section{Minimum degree 2 or 3}

We will start with an easy remark that is mainly contained for completeness:

\begin{remark}\label{rem:2}

  Any finite set $M=\{d_0=2,d_1,d_2,\dots ,d_k\}\subset \mathbb{N}$ with $2< d_1 < d_2 < \dots < d_k$ is uhc-realizable and if $k>0$, it is also strongly uhc-realizable.

\end{remark}

\begin{proof}

We will first prove that $M$ is uhc-realizable. $|M|=1$ is trivial. If $|M|=2$, one can take $K_{d_1+1}$ and subdivide the edges of a hamiltonian
cycle.  If $|M|>2$ one can e.g. take complete graphs $K_{d_1+1}, \dots K_{d_k+1}$, remove an edge $e_{d_i}\in K_{d_i+1}$ for $1\le i<k$, an edge
$e'_{d_i}\in K_{d_i+1}$ for $2\le i\le k$ with $e_{d_i}\cap e'_{d_i}=\emptyset$ for $2\le i < k$, and then connect the endpoints of $e_{d_i}$ and $e'_{d_{i+1}}$
for $1\le i< k$. The result is obviously hamiltonian and 2-connected and after subdividing the edges of a hamiltonian cycle, one has a uniquely hamiltonian graph with
exactly the vertex degrees in $M$.

To show that $M$ is strongly uhc-realizable, assume a partition $D_1,D_2$ to be given.  If $D_2=\emptyset$ one can subdivide edges on the hamiltonian cycle
arbitrarily often to obtain the sequence of graphs. If $D_2\not=\emptyset$ one can use the above construction for multisets $M'_j$ containing the same elements as $M$,
but numbers in $D_1$ exactly once and numbers in $D_2$ exactly $j$ times.

  \end{proof}

\section{Minimum degree 3 and 4}

The following construction is a slight modification of a construction by H.~Fleischner \cite{4_14}.

Let $P=(V,E)$ be a graph and $s,t,v\in V$ be vertices.
If there is a unique hamiltonian path from $s$ to $t$ in the graph $P_{-v}=P[V\setminus\{v\}]$
induced by $V\setminus\{v\}$, we call ${\cal P}=(P,s,t,v)$ a {\em weak  H-plugin} or just an  H-plugin. If in addition there is no hamiltonian path from $s$ to $t$ in $P$ (so also containing $v$),
we call ${\cal P}=(P,s,t,v)$ a {\em strong H-plugin}.

In cases where $s,t,$ and $v$ are clear from the context, we will also refer to the graph $P$ alone as an H-plugin.

For an H-plugin $(P,s,t,v)$ and a graph $G$ with a
vertex $x$ of degree $3$ with neighbours $y,x_1,x_2$, 
we define the {\em {\cal P}-splice} of $G$ at $\{x,y\}$, denoted as $O(x,y,{\cal P})$ as the graph obtained by removing $x$, connecting $x_1$ with the vertex $s$ in a copy $P'$ of $P$, $x_2$ with the vertex $t$ in
$P'$
and identifying the vertex $v$ in $P'$ with $y$. This operation is sketched in Figure~\ref{fig:plugin}. We will also refer to it shortly as {\em splicing the edge $\{x,y\}$}. The notation  $O(x,y,{\cal P})$ does not take into account which of the vertices is $x_1$ and which is $x_2$, so in general
 $O(x,y,{\cal P})$ is one of the two possibilities. Elementary arguments  show that if
$P$ -- or at least $P$ together with a new vertex connected to $s,t$, and $v$ -- as well as 
$G$ are 3-connected, then $O(x,y,{\cal P})$ is 3-connected.

\begin{figure}[tb]
	\centering
	\includegraphics[width=0.7\textwidth]{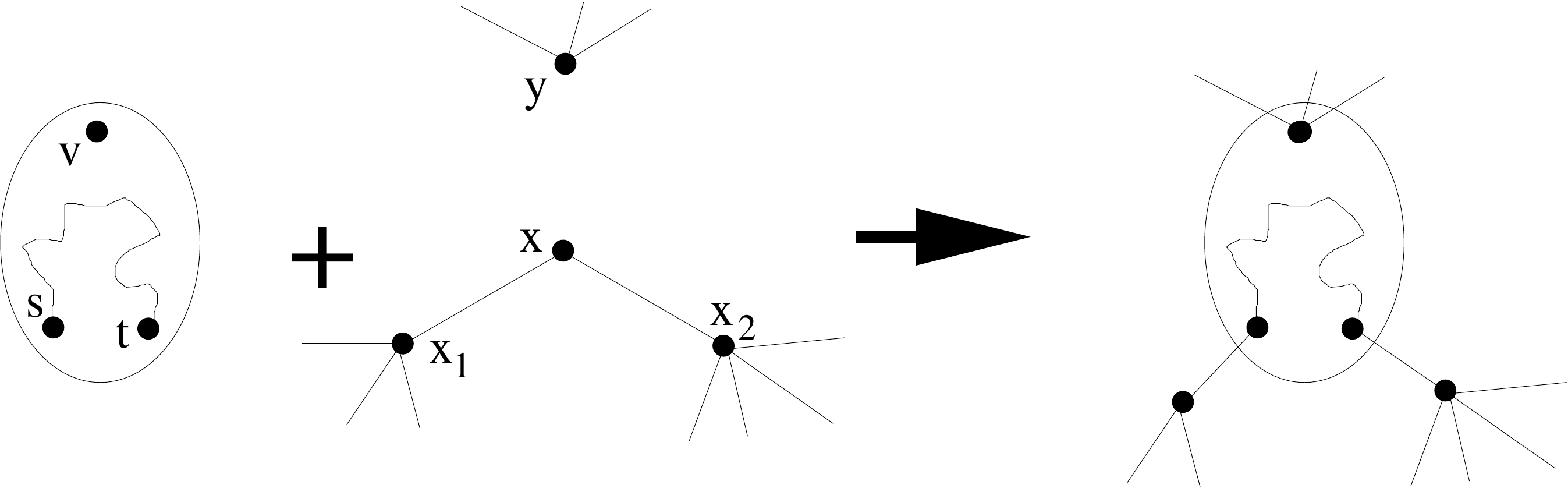}
	\caption{The splicing operation.}
	\label{fig:plugin}
\end{figure}

The following lemma and corollary are stronger versions of Lemmas 1,2, and 3 in \cite{4_14}.

\begin{lemma}\label{lem:op}(parts already in \cite{4_14})
  Let $G=(V,E)$ be a graph with a unique hamiltonian cycle $C_H$, $x\in V$ of degree $3$ with neighbour $y$, so that the edge $\{x,y\}$ is not on $C_H$. Let  ${\cal P}=(P,s,t,v)$ be an H-plugin.

  If at least one of the following three conditions is fulfilled, then $O(x,y,{\cal P})$ has a unique hamiltonian cycle $C_{H,O}$. Except for the edges incident
  with $x$, all edges of $C_H$ are also contained in $C_{H,O}$.

  \begin{description}
  \item[(i)] $G[V\setminus \{y\}]$ is not hamiltonian.
  \item[(ii)] $\{x,y\}$ lies in a triangle.
  \item[(iii)] ${\cal P}$ is a strong H-plugin.
  \end{description}

\end{lemma}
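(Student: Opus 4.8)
The plan is to produce the hamiltonian cycle $C_{H,O}$ explicitly first, and then prove uniqueness by a cut argument across the two ``halves'' of the spliced graph. For existence: since $\deg(x)=3$ and $\{x,y\}\notin C_H$, the cycle $C_H$ must use both edges $\{x,x_1\}$ and $\{x,x_2\}$, so $Q:=C_H-x$ is a hamiltonian path of $G-x$ from $x_1$ to $x_2$; let $R$ be the (unique) hamiltonian path of $P'_{-v'}$ from $s'$ to $t'$ supplied by the H-plugin property of a copy $P'$ of $P$. Then $C_{H,O}:=Q\cup\{x_1s'\}\cup R\cup\{t'x_2\}$ is a hamiltonian cycle of $O:=O(x,y,{\cal P})$ (it covers $(V\setminus\{x\})\cup(V(P')\setminus\{v'\})$), and it visibly contains every edge of $C_H$ except the two incident with $x$. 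It remains to show it is the only one.

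For uniqueness I would use the partition $V(O)=A'\cup B$ with $A'=V\setminus\{x\}$ (so $y=v'$ lies in $A'$) and $B=V(P')\setminus\{v'\}$, together with the easy facts that $O[A']=G-x$, $O[V(P')]=P'$, and that the only edges of $O$ between $A'$ and $B$ are $\{x_1,s'\}$, $\{x_2,t'\}$, and the edges $\{y,b\}$ with $b\in N_{P'}(v')$. Let $C'$ be any hamiltonian cycle of $O$, let $m\le2$ count its edges in $\{\{x_1,s'\},\{x_2,t'\}\}$ and $k\le2$ count its edges of the form $\{y,b\}$ with $b\in B$. A cycle meets every edge cut an even number of times and must use both sides, so $m+k$ is even and positive, i.e.\ $(m,k)\in\{(2,0),(0,2),(1,1),(2,2)\}$. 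In each case I delete the crossing edges of $C'$ and analyse the resulting arcs, using that each arc lies wholly in $A'$ or wholly in $B$ (consecutive edges of an arc share a vertex and $A'\cap B=\emptyset$) and that the endpoints of the arcs are precisely the endpoints of the deleted crossing edges.

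Case $(0,2)$ is the good case: the two arcs are a hamiltonian path of $G-x$ from $x_1$ to $x_2$ and a hamiltonian path of $P'_{-v'}$ from $s'$ to $t'$; adding $x$ back to the first gives a hamiltonian cycle of $G$, which must be $C_H$, so that arc is $Q$, while the second is $R$ by the H-plugin property; hence $C'=C_{H,O}$. Cases $(2,0)$ and $(1,1)$ I expect to be impossible \emph{with no extra hypothesis}: in $(2,0)$ both crossing edges are at $y$, so after deletion one arc runs from a vertex of $B$ to a vertex of $B$ yet must also contain $x_1\in A'$, impossible; in $(1,1)$ the $A'$-arc is a hamiltonian path of $G-x$ from $x_1$ (or $x_2$) to $y$, and adding $x$ back uses the edge $\{x,y\}$, producing a second hamiltonian cycle of $G$ and contradicting uniqueness of $C_H$. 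The only configuration not automatically excluded is $(2,2)$, and this is exactly where the hypotheses enter.

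In case $(2,2)$ the $A'$-side of $C'$ is a hamiltonian path of $G-\{x,y\}$ from $x_1$ to $x_2$, and, since both edges of $C'$ at $y=v'$ are then $P'$-edges, $C'$ restricted to $V(P')$ is a hamiltonian path of $P'$ from $s'$ to $t'$. Hypothesis (iii) forbids the latter directly. Hypothesis (i) is violated because adding $x$ back to the $A'$-path yields a hamiltonian cycle of $G[V\setminus\{y\}]$. And hypothesis (ii) is violated because, writing the triangle as $\{x,y,x_i\}$, one can close the $A'$-path into a hamiltonian cycle of $G$ through the edges $\{y,x_i\}$, $\{x_{3-i},x\}$ and $\{x,y\}$, again contradicting uniqueness of $C_H$. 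So under any one of (i)--(iii), case $(2,2)$ cannot occur, whence $C'=C_{H,O}$, as required. The main obstacle I anticipate is organisational rather than conceptual: setting up the cut so that $y=v'$ sits unambiguously on the $A'$ side while still carrying $P'$-edges, and correctly matching arc-endpoints to deleted crossing edges in each of the four cases; once that is in place each case is a two- or three-line argument, and (i), (ii), (iii) are simply three different ways of killing the single surviving bad configuration.
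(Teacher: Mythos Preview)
Your proof is correct and follows essentially the same route as the paper's: both construct $C_{H,O}$ by replacing $x_1,x,x_2$ with the unique $s$--$t$ path in $P_{-v}$, then analyse how an arbitrary hamiltonian cycle can cross between the $G$-part and the $P$-part, with your cases $(m,k)=(2,0),(1,1),(2,2)$ matching the paper's cases (a),(b),(c) exactly and each of (i)--(iii) used to kill the single surviving case. One purely notational slip: the cases you label $(0,2)$ and $(2,0)$ are swapped relative to your own definitions of $m$ and $k$ --- the ``good'' case you describe has both edges $\{x_1,s'\},\{x_2,t'\}$ present and no $y$-to-$B$ edges, hence $(m,k)=(2,0)$, not $(0,2)$ --- but the arguments under each label are sound.
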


Condition (iii) also explains the name {\em strong} H-plugin: while in general the splicing of edges that are not on the unique hamiltonian cycle
only guarantees a unique hamiltonian cycle in the result if the edges satisfy some extra condition, this extra condition is not necessary if ${\cal P}$ is strong.

\begin{proof}
  As $s$ and $t$ have only one edge to the outside of (the copy of) $P$ in $O(x,y,{\cal P})$, none of them can be incident only with edges of a hamiltonian cycle $C_{H,O}$ of $O(x,y,{\cal P})$ that lie outside $P$.
  To this end there are in principle three ways how $C_{H,O}$  could pass through $P$:

  \begin{description}
  \item[a.)] by a hamiltonian path of $P_{-v}$  from $s$ to $t$ while the vertex $v=y$ is incident to two edges of $C_{H,O}$ not in $P$,
  \item[b.)] by a hamiltonian path of $P$ from $v$ to $s$ or to $t$,
  \item[c.)] by a hamiltonian path of $P$ from $s$ to $t$.
  \end{description}

  In all three cases (i), (ii), and (iii) of the lemma, we can get a hamiltonian cycle of $O(x,y,{\cal P})$ passing $P$ like described in a.)\ if we replace the part $x_1,x ,x_2$ in $C_H$ by
  $x_1,s , \dots ,t ,x_2$ with the middle part the unique hamiltonian path from $s$ to $t$ in $P_{-v}$. So there is always a hamiltonian cycle for case a.), but that cycle is
  unique due to the two paths in $P_{-v}$ and outside $P_{-v}$ being unique. 

  Assume now that $O(x,y,{\cal P})$ has a hamiltonian cycle passing $P$ as in case b.) and assume w.l.o.g.\ that the endpoint is $s$. Replacing the part $y=v,\dots ,s ,x_1$
  by $y,x,x_1$, we get a hamiltonian cycle of $G$ containing $\{x,y\}$, which does not exist, as $C_H$ is unique. So a hamiltonian cycle falling into case b.)\ does not exist.

  It remains to be shown that also case c.) can not occur under the additional prerequisites.

  {\bf (i)}

  Assume that $O(x,y,{\cal P})$ has a hamiltonian cycle passing $P$ as in case c.). Replacing the part $x_1,s ,\dots ,t ,x_2$ (now also containing
  $v=y$) by $x_1,x ,x_2$, we get a cycle in $G$ missing only $y$ -- that is: a hamiltonian cycle of $G[V\setminus \{y\}]$, which does by assumption not
  exist.

  {\bf (ii)}
  This is a special case of (i).  Assume that  $G[V\setminus \{y\}]$ contains a hamiltonian cycle $C'_H$. Then $C'_H$ passes $x$ by $x_1,x ,x_2$, but replacing this
  part by $x_1, y, x, x_2$ or $x_1, x, y, x_2$ -- depending on whether the triangle is $x_1,x,y$ or $x_2,x,y$ -- we get a hamiltonian cycle of $G$ containing $e$, which does not exist, as $C_H$ is unique.

  {\bf (iii)} In this case the prerequisites are exactly that a path as in c.)\ does not exist.

  \end{proof}

\begin{corollary}\label{cor:op}(parts already in \cite{4_14})
  Let $G=(V,E)$ be a graph with a unique hamiltonian path $P_H$ from $s\in V$ to $t\in V$. Assume $x\in V$, $x\not\in \{s,t\}$ is of degree $3$ with neighbour $y$, so that the edge $\{x,y\}$ is not on $P_H$.
  Let  ${\cal P}=(P,s',t',v)$ be an H-plugin.

   If at least one of the following four conditions is fulfilled, then $O(x,y,{\cal P})$  has a unique hamiltonian path $P_{H,O}$ from $s$ to $t$. Except for the edges incident
  with $x$, all edges of $P_H$ are also contained in $P_{H,O}$.

  \begin{description}
  \item[(i)] $y\not\in \{s,t\}$, and $G[V\setminus \{y\}]$ has no hamiltonian path from $s$ to $t$.
  \item[(ii)] $\{x,y\}$ lies in a triangle.
  \item[(iii)] ${\cal P}$ is a strong H-plugin.
  \item[(iv)] $y\in \{s,t\}$.
  \end{description}

\end{corollary}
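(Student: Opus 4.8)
The plan is to mimic the proof of Lemma~\ref{lem:op} almost verbatim, replacing the unique hamiltonian cycle $C_H$ by the unique hamiltonian path $P_H$ from $s$ to $t$, and replacing ``hamiltonian cycle of $G$ containing $\{x,y\}$'' by ``hamiltonian path of $G$ from $s$ to $t$ containing $\{x,y\}$'' wherever that phrase occurs. First I would observe that, since $x\notin\{s,t\}$, the path $P_H$ enters and leaves $x$ along two edges, say $x_1,x,x_2$ with $x_1,x_2$ two of the three neighbours of $x$; since $\{x,y\}$ is not on $P_H$, the third neighbour $y$ is not adjacent to $x$ on $P_H$. After splicing, the vertices $s'$ and $t'$ of the copy $P'$ each have exactly one edge leaving $P'$, so any hamiltonian path $P_{H,O}$ of $O(x,y,{\cal P})$ from $s$ to $t$ must traverse $P'$ in one of the three ways a.), b.), c.) from the lemma (with $v=y$), \emph{except} that now one must also rule out the degenerate possibility that $s$ or $t$ itself lies inside $P'$ — but that cannot happen because $s,t\in V$ are unaffected by the splice and $s,t\neq x$, so $s,t$ remain outside $P'$.

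For the existence and uniqueness of a hamiltonian path of type a.), I would replace the segment $x_1,x,x_2$ of $P_H$ by $x_1,s',\dots,t',x_2$, inserting the unique hamiltonian path of $P'_{-v}$ from $s'$ to $t'$; this is the unique way to realise case a.) because both the piece inside $P'_{-v}$ and the piece outside (which is $P_H$ with the detour) are forced. For case b.), a hamiltonian path of $O(x,y,{\cal P})$ using a hamiltonian path of $P'$ from $v=y$ to (w.l.o.g.) $s'$ can be contracted back to a hamiltonian path of $G$ from $s$ to $t$ by replacing $y,\dots,s',x_1$ with $y,x,x_1$; since this new $G$-path uses the edge $\{x,y\}$, it differs from $P_H$, contradicting uniqueness of $P_H$ — so case b.) is impossible with no extra hypothesis needed, exactly as in the lemma. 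Case c.) is where the four conditions come in: a hamiltonian path of type c.) uses a hamiltonian path of $P'$ from $s'$ to $t'$ (which now also covers $v=y$), and collapsing $x_1,s',\dots,t',x_2$ back to $x_1,x,x_2$ yields a path in $G$ that visits every vertex except $y$ and runs from $s$ to $t$. Under (i), $y\notin\{s,t\}$ means this is a genuine hamiltonian path of $G[V\setminus\{y\}]$ from $s$ to $t$, excluded by hypothesis. Under (ii) it is excluded as a special case of (i) — a hamiltonian path of $G[V\setminus\{y\}]$ from $s$ to $t$ goes $\dots,x_1,x,x_2,\dots$ and can be rerouted through the triangle as $\dots,x_1,y,x,x_2,\dots$ (or $\dots,x_1,x,y,x_2,\dots$) to give a hamiltonian $s$–$t$ path of $G$ on the edge $\{x,y\}$, contradicting uniqueness; one should just note the triangle forces $y\in\{s,t\}$ is impossible or handle it, but actually if $y\in\{s,t\}$ then (iv) applies so there is nothing to prove. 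Under (iii), the definition of strong H-plugin says outright that $P'$ has no hamiltonian path from $s'$ to $t'$, so type c.) cannot occur.

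Condition (iv) is the genuinely new case relative to the lemma and I expect it to be the only real obstacle, though a mild one. If $y\in\{s,t\}$, say $y=t$ (the case $y=s$ is symmetric), then after the splice $y$ is identified with $v$ in $P'$, so $t=y=v$ now lies inside $P'$ and is no longer an endpoint of the ambient structure in the naive sense; I need to check what ``hamiltonian path from $s$ to $t$'' means in $O(x,y,{\cal P})$ when $t$ has been absorbed into the plugin. The cleanest way is: in type a.), the two edges of $P_{H,O}$ at $v=y=t$ would have to be edges not in $P'$, but $t=y$ is an endpoint of the path, so it has only \emph{one} incident edge, a contradiction — hence type a.) does not give a path but rather we must reexamine. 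Actually the resolution is that when $y=t$, the detour $x_1,s',\dots,t',x_2$ is still inserted replacing $x_1,x,x_2$, and the path now ends when it reaches $v=t$ inside $P'$; this works precisely when $P'_{-v}$... — the combinatorics needs a short case analysis of where the endpoint sits, and this bookkeeping about which of $s',t',v$ the path terminates at is the part I would be most careful with. In all other respects the proof is a routine transcription of Lemma~\ref{lem:op}, so I would present cases (i)--(iii) tersely by reference to that proof and spend the bulk of the write-up on (iv).
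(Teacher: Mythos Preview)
Your direct transcription of the cycle argument works fine for (i)--(iii), but the paper takes a much shorter route that you have missed entirely: add a single new vertex $w$ to $G$, join it to $s$ and $t$, and observe that $G'=G+w$ has a unique hamiltonian cycle if and only if $G$ has a unique hamiltonian $s$--$t$ path. Splicing $\{x,y\}$ in $G'$ and splicing it in $G$ produce graphs that differ only by the same pendant vertex $w$, so Lemma~\ref{lem:op} applied to $G'$ yields the corollary in two lines. Conditions (i), (ii), (iii) of the corollary translate verbatim into the same-numbered conditions of the lemma for $G'$, and condition (iv) becomes condition (i) of the lemma: if $y\in\{s,t\}$ then removing $y$ from $G'$ leaves $w$ with degree~$1$, so $G'[V'\setminus\{y\}]$ is trivially non-hamiltonian. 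This is the whole proof in the paper.

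Your handling of (iv) by direct case analysis is salvageable, but you have the key observation pointed at the wrong case. When $y=t$ (say), the endpoint $v=y=t$ of $P_{H,O}$ carries exactly one path-edge. In case~c.) the segment inside $P'$ is a hamiltonian $s'$--$t'$ path of $P'$, which visits $v$ as an \emph{interior} vertex and hence forces two path-edges at $v$ --- that is the contradiction, so c.) is impossible under (iv). Case~a.) is \emph{not} contradicted: there $v$ simply has one edge outside $P'$ rather than two, and contracting the $P'_{-v}$ segment back to $x$ recovers an $s$--$t$ hamiltonian path of $G$ not using $\{x,y\}$, hence equal to $P_H$; so a.) still furnishes the unique $P_{H,O}$. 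Case~b.) also goes through unchanged. Your write-up currently derives a contradiction in a.) and then stalls; swap the target to c.) and the direct proof closes. Even so, the added-vertex reduction is both shorter and conceptually cleaner, and it is what the paper actually does.
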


\begin{proof}

  Adding a new vertex to $G$ and connecting it with $s$ and $t$, the resulting graph $G'$ has a unique hamiltonian cycle if and only if $G$ has a unique hamiltonian
  path from $s$ to $t$. Applying Lemma~\ref{lem:op} to $G'$ we get the results. Case (iv) follows by case (i) of Lemma~\ref{lem:op}.

\end{proof}

We can now prove the main theorem for minimum degree $3$:

\begin{theorem}\label{thm:alldegcubic}

  A finite set $M=\{d_0=3,d_1,d_2,\dots ,d_k\}$ with $3< d_1 < d_2 < \dots < d_k$ of natural numbers is uhc-realizable if and only if $M$ contains an even number. In that case it
  is also strongly uhc-realizable.

\end{theorem}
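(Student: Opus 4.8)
The plan is to split into the two directions. The "only if" direction is quick: if $M$ contains no even number, then every vertex of a graph realizing $M$ has odd degree, and by Thomason's result (cited in the introduction as \cite{Thomason_odd}) no uniquely hamiltonian graph with all degrees odd exists; hence $M$ is not uhc-realizable. The substance is the "if" direction, where I assume $d_j$ is an even element of $M$, and moreover I must produce, for every partition $D_1,D_2$ of $\{d_1,\dots,d_k\}$, an infinite family of $3$-connected uniquely hamiltonian graphs with degree set exactly $M$ in which the $D_1$-degrees appear a bounded number of times and the $D_2$-degrees appear a linear number of times.

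The idea is to start from a single small $3$-connected uniquely hamiltonian "seed" graph $G_0$ whose degree set is $\{3, d_j\}$ (or can be arranged to contain these), and then use the splicing operation of Lemma~\ref{lem:op} repeatedly to (a) raise the degree of a chosen vertex from $3$ to any prescribed target value, and (b) manufacture as many extra degree-$3$ vertices as we like, which keeps $3\in M_{deg}$ and provides fresh edges to splice at. Concretely: an H-plugin $(P,s,t,v)$ with $\deg_P(v)=r-1$ has the effect, when spliced at an edge $\{x,y\}$ with $\deg(x)=3$, of deleting the degree-$3$ vertex $x$, attaching the plugin so that $s,t$ gain one external edge each, and merging $v$ with $y$ so that $y$'s degree increases by $r-1$ — i.e. a splice is a local "degree surgery" tool. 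By choosing small H-plugins with controlled internal degrees (paths or near-paths, so that all internal vertices have degree $2$ or $3$ except $v$, which absorbs the degree increment), we can realize each large degree $d_i$ at a single vertex, one $d_i$ at a time, and repeat a given $d_i$ as often as desired. Lemma~\ref{lem:op} guarantees that each splice preserves unique hamiltonicity, provided the spliced edge is off the unique hamiltonian cycle and one of conditions (i)–(iii) holds; using a strong H-plugin (condition (iii)) removes the side condition entirely, so the cleanest route is to build everything from strong H-plugins.

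The steps, in order, are: (1) dispose of the "only if" direction via Thomason; (2) construct an explicit small strong H-plugin $\mathcal{P}$ with $\deg(v)=2$ (equivalently, a suitable near-path), so that splicing at a degree-$3$ vertex preserves all other degrees and creates a controlled number of new degree-$3$ vertices and a controlled number of degree-$2$ vertices on the hamiltonian cycle, which can subsequently be absorbed; (3) build, for each even $d_j$ and each target degree $d_i$, a strong H-plugin $\mathcal{P}_{d_i}$ with $\deg(v)=d_i-1$; (4) exhibit a base $3$-connected uniquely hamiltonian graph $G_0$ with an off-cycle edge $\{x,y\}$ at a degree-$3$ vertex $x$ — for instance a suitable small graph of minimum degree $3$ containing a triangle (so condition (ii) is also available as a backup) — and verify $3$-connectivity is maintained under splicing by the elementary argument already quoted before Lemma~\ref{lem:op}; (5) given a partition $D_1,D_2$, splice once for each $d\in D_1$ and $j$ times for each $d\in D_2$, and in the $D_2$ case note that each splice can simultaneously be arranged to create $\Theta(1)$ new degree-$3$ vertices so that, after $|V_i|$ grows, the count of $D_2$-degree vertices is $\Theta(|V_i|)$ while $D_1$-degree vertices stay bounded; (6) finally check the degree set is exactly $M$: all four required degrees $d_0=3$ and each $d_i$ occur, and no spurious degree is introduced (this is where the care in choosing the plugins' internal structure pays off, and where any residual degree-$2$ vertices from step (2) must be eliminated or their appearance avoided).

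The main obstacle I anticipate is step (3)/(6) done simultaneously: constructing strong H-plugins that realize an arbitrary prescribed even-or-odd degree $d_i\ge 4$ at the single vertex $v$, while keeping every other internal vertex at degree exactly $3$ (so as not to pollute $M_{deg}$), and while genuinely being \emph{strong} (no hamiltonian $s$–$t$ path through $v$) — the strongness is what makes the splicing unconditional, and verifying it for an infinite family of plugins, one per target degree, is the delicate combinatorial core. A secondary subtlety is guaranteeing $3$-connectivity of the base graph and its splices uniformly, and making sure the "bounded vs. linear" bookkeeping in the strongly-uhc-realizable definition comes out right for \emph{every} partition, including the extreme cases $D_1=\emptyset$ and $D_2=\emptyset$.
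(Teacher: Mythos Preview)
Your overall framework---Thomason for the ``only if'' direction, then repeated splicing with strong H-plugins starting from a small $3$-connected uniquely hamiltonian base graph---matches the paper's. The gap is exactly where you anticipate it, step~(3): you have not actually constructed the family of strong H-plugins $\mathcal{P}_{d_i}$, one per target degree, with all internal vertices of degree~$3$. The paper sidesteps this entirely by using a \emph{single} fixed strong H-plugin, namely the Petersen graph with one edge deleted (take $s,t$ to be the endpoints of the deleted edge and $v$ any other vertex). Strongness is immediate because the Petersen graph is non-hamiltonian, and this plugin raises $\deg(y)$ by exactly~$2$ while every other new vertex has degree~$3$. Iterating the same plugin at the same vertex~$y$ reaches any even degree starting from a vertex of degree~$4$ and any odd degree $\ge 5$ starting from a vertex of degree~$3$, so no per-degree construction is needed and your ``delicate combinatorial core'' evaporates. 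The base graph is a specific $18$-vertex $3$-connected uniquely hamiltonian graph with degree set $\{3,4\}$.

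Two further simplifications relative to your plan. To manufacture additional degree-$3$ vertices the paper simply replaces a cubic vertex by a triangle; no degree-$2$ vertices ever arise, so your step~(2) concern about ``absorbing'' them is moot. For the strongly-realizable bookkeeping it glues graphs realizing subsets of~$M$ by deleting one degree-$3$ vertex from each and matching the three dangling edges so that the unique hamiltonian cycles concatenate; this is what makes the $D_2\neq\emptyset$ case clean. Finally, a small arithmetic slip: after splicing, $y$ has degree $\deg_G(y)-1+\deg_P(v)$ (the edge $\{x,y\}$ is lost when $x$ is removed), so a plugin with $\deg_P(v)=r-1$ raises $y$'s degree by $r-2$, not $r-1$.
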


\begin{proof}

  The fact that there is no uniquely hamiltonian graph $G$ with $M_{deg}(G)=M$ if $M$ contains no even number, is a well known result of
  Thomason \cite{Thomason_odd} -- no matter what the condition on connectivity is. To show that $M$ is uhc-realizable if $M$ contains an even number,
  we will explicitly construct a $3$-connected uniquely hamiltonian graph $G$ with $M_{deg(G)}=M$ in that case.

\begin{figure}[tb]
  \centering
     \resizebox{0.5\textwidth}{0.5\textwidth}
        {
          \begin{tikzpicture}[scale=0.06]
\def\vertexscale{1.20}
\def\labelscale{1.50}
\node [circle,blue,draw,scale=\vertexscale] (1) at (-63.88772,-6.43188) {1};
\node [circle,blue,draw,scale=\vertexscale] (2) at (-17.35741,-49.97904) {2};
\node [circle,black,draw,scale=\vertexscale] (3) at (-28.27114,66.22080) {3};
\node [circle,blue,draw,scale=\vertexscale] (4) at (69.73301,-25.96196) {4};
\node [circle,blue,draw,scale=\vertexscale] (5) at (58.36870,-49.64097) {5};
\node [circle,blue,draw,scale=\vertexscale] (6) at (39.50924,14.71820) {6};
\node [circle,blue,draw,scale=\vertexscale] (7) at (10.30821,-23.76097) {7};
\node [circle,blue,draw,scale=\vertexscale] (8) at (25.03614,-39.60628) {8};
\node [circle,blue,draw,scale=\vertexscale] (9) at (52.01586,-18.81717) {9};
\node [circle,blue,draw,scale=\vertexscale] (10) at (63.88772,6.43188) {10};
\node [circle,blue,draw,scale=\vertexscale] (11) at (17.35741,49.97904) {11};
\node [circle,black,draw,scale=\vertexscale] (12) at (28.27114,-66.22080) {12};
\node [circle,blue,draw,scale=\vertexscale] (13) at (-69.73301,25.96196) {13};
\node [circle,blue,draw,scale=\vertexscale] (14) at (-58.36870,49.64097) {14};
\node [circle,blue,draw,scale=\vertexscale] (15) at (-39.50924,-14.71820) {15};
\node [circle,blue,draw,scale=\vertexscale] (16) at (-10.30821,23.76097) {16};
\node [circle,blue,draw,scale=\vertexscale] (17) at (-25.03614,39.60628) {17};
\node [circle,blue,draw,scale=\vertexscale] (18) at (-52.01586,18.81717) {18};
\tkzDefPoint(-79.33533,-60.87614){19}
\tkzDefPoint(-38.26834,-92.38795){20}
\tkzDefPoint(-60.87614,-79.33533){21}
\tkzDefPoint(-13.05262,-99.14449){22}
\tkzDefPoint(13.05262,-99.14449){23}
\tkzDefPoint(99.14449,-13.05262){24}
\tkzDefPoint(99.14449,13.05262){25}
\tkzDefPoint(60.87614,-79.33533){26}
\tkzDefPoint(79.33533,-60.87614){27}
\tkzDefPoint(-79.33533,60.87614){28}
\tkzDefPoint(-60.87614,79.33533){29}
\tkzDefPoint(-99.14449,-13.05262){30}
\tkzDefPoint(-99.14449,13.05262){31}
\tkzDefPoint(60.87614,79.33533){32}
\tkzDefPoint(79.33533,60.87614){33}
\tkzDefPoint(-13.05262,99.14449){34}
\tkzDefPoint(13.05262,99.14449){35}
\tkzDefPoint(-92.38795,-38.26834){36}
\tkzDefPoint(-92.38795,38.26834){37}
\tkzDefPoint(38.26834,-92.38795){38}
\tkzDefPoint(38.26834,92.38795){39}
\tkzDefPoint(92.38795,38.26834){40}
\tkzDefPoint(92.38795,-38.26834){41}
\tkzDefPoint(-38.26834,92.38795){42}
\draw [black] (1) to (30);
\node [draw=none,fill=none,scale=\labelscale] () at (-104.10171,-13.70525) {2};
\draw [black] (1) to (18);
\draw [black] (1) to (15);
\draw [black] (2) to (22);
\node [draw=none,fill=none,scale=\labelscale] () at (-13.70525,-104.10171) {1};
\draw [black] (2) to (21);
\node [draw=none,fill=none,scale=\labelscale] () at (-63.91995,-83.30210) {3};
\draw [black] (2) to (7);
\draw [black] (3) to (29);
\node [draw=none,fill=none,scale=\labelscale] () at (-63.91995,83.30210) {2};
\draw [black] (3) to (34);
\node [draw=none,fill=none,scale=\labelscale] () at (-13.70525,104.10171) {4};
\draw [black] (3) to (17);
\draw [black] (3) to (14);
\draw [black] (4) to (5);
\draw [black] (4) to (9);
\draw [black] (4) to (24);
\node [draw=none,fill=none,scale=\labelscale] () at (104.10171,-13.70525) {3};
\draw [black] (5) to (27);
\node [draw=none,fill=none,scale=\labelscale] () at (83.30210,-63.91995) {6};
\draw [black] (5) to (12);
\draw [black] (6) to (7);
\draw [black] (6) to (33);
\node [draw=none,fill=none,scale=\labelscale] () at (83.30210,63.91995) {5};
\draw [black] (6) to (10);
\draw [black] (7) to (8);
\draw [black] (8) to (12);
\draw [black] (8) to (9);
\draw [black] (9) to (10);
\draw [black] (10) to (25);
\node [draw=none,fill=none,scale=\labelscale] () at (104.10171,13.70525) {11};
\draw [black] (11) to (35);
\node [draw=none,fill=none,scale=\labelscale] () at (13.70525,104.10171) {10};
\draw [black] (11) to (32);
\node [draw=none,fill=none,scale=\labelscale] () at (63.91995,83.30210) {12};
\draw [black] (11) to (16);
\draw [black] (12) to (23);
\node [draw=none,fill=none,scale=\labelscale] () at (13.70525,-104.10171) {13};
\draw [black] (12) to (26);
\node [draw=none,fill=none,scale=\labelscale] () at (63.91995,-83.30210) {11};
\draw [black] (13) to (14);
\draw [black] (13) to (18);
\draw [black] (13) to (31);
\node [draw=none,fill=none,scale=\labelscale] () at (-104.10171,13.70525) {12};
\draw [black] (14) to (28);
\node [draw=none,fill=none,scale=\labelscale] () at (-83.30210,63.91995) {15};
\draw [black] (15) to (16);
\draw [black] (15) to (19);
\node [draw=none,fill=none,scale=\labelscale] () at (-83.30210,-63.91995) {14};
\draw [black] (16) to (17);
\draw [black] (17) to (18);
\tkzDefPoint(-93.49426,-35.47990){A}
\tkzDefPoint(-93.49426,35.47990){B}
\tkzDefPoint(0.0,0.0){C}
\tkzDrawArc[<-,line width=0.9mm, red](C,B)(A)
\tkzDefPoint(-91.19850,41.02235){A}
\tkzDefPoint(-41.02235,91.19850){B}
\tkzDefPoint(0.0,0.0){C}
\tkzDrawArc[<-,line width=0.9mm, blue](C,B)(A)
\tkzDefPoint(-35.47990,93.49426){A}
\tkzDefPoint(35.47990,93.49426){B}
\tkzDefPoint(0.0,0.0){C}
\tkzDrawArc[<-,line width=0.9mm, green](C,B)(A)
\tkzDefPoint(41.02235,91.19850){A}
\tkzDefPoint(91.19850,41.02235){B}
\tkzDefPoint(0.0,0.0){C}
\tkzDrawArc[<-,line width=0.9mm, orange](C,B)(A)
\tkzDefPoint(93.49426,35.47990){A}
\tkzDefPoint(93.49426,-35.47990){B}
\tkzDefPoint(0.0,0.0){C}
\tkzDrawArc[->,line width=0.9mm, green](C,B)(A)
\tkzDefPoint(91.19850,-41.02235){A}
\tkzDefPoint(41.02235,-91.19850){B}
\tkzDefPoint(0.0,0.0){C}
\tkzDrawArc[->,line width=0.9mm, orange](C,B)(A)
\tkzDefPoint(35.47990,-93.49426){A}
\tkzDefPoint(-35.47990,-93.49426){B}
\tkzDefPoint(0.0,0.0){C}
\tkzDrawArc[->,line width=0.9mm, red](C,B)(A)
\tkzDefPoint(-41.02235,-91.19850){A}
\tkzDefPoint(-91.19850,-41.02235){B}
\tkzDefPoint(0.0,0.0){C}
\tkzDrawArc[->,line width=0.9mm, blue](C,B)(A)
\node [black,circle,draw,fill=white,scale=0.75,line width=1mm] (20) at (-38.26834,-92.38795) {};
\node [black,circle,draw,fill=white,scale=0.75,line width=1mm] (36) at (-92.38795,-38.26834) {};
\node [black,circle,draw,fill=white,scale=0.75,line width=1mm] (37) at (-92.38795,38.26834) {};
\node [black,circle,draw,fill=white,scale=0.75,line width=1mm] (38) at (38.26834,-92.38795) {};
\node [black,circle,draw,fill=white,scale=0.75,line width=1mm] (39) at (38.26834,92.38795) {};
\node [black,circle,draw,fill=white,scale=0.75,line width=1mm] (40) at (92.38795,38.26834) {};
\node [black,circle,draw,fill=white,scale=0.75,line width=1mm] (41) at (92.38795,-38.26834) {};
\node [black,circle,draw,fill=white,scale=0.75,line width=1mm] (42) at (-38.26834,92.38795) {};
\end{tikzpicture}
          }
	\caption{The graph $U_{3,4}$ drawn as a minimum genus embedding. Sides with the same colour have to be identified. This is one of the five smallest
          uniquely hamiltonian graphs with only degrees $3$ and $4$ as given in \cite{fewHC}. The unique hamiltonian cycle is $1,2,\dots ,18$.  The vertices $3$
          and $12$ are the only vertices of degree $4$.}
	\label{fig:uhc_3_4}
\end{figure}

\begin{figure}[tb]
  \centering
       \resizebox{0.45\textwidth}{0.45\textwidth}
        {
          \begin{tikzpicture}[scale=0.065]
\def\vertexscale{1.70}
\def\labelscale{2.40}
\node [circle,black,draw,scale=\vertexscale] (1) at (31.62051,31.62051) {s=1};
\node [circle,blue,draw,scale=\vertexscale] (2) at (57.47854,0.43410) {2};
\node [circle,blue,draw,scale=\vertexscale] (3) at (27.61963,-21.66896) {3};
\node [circle,blue,draw,scale=\vertexscale] (4) at (5.30558,5.30558) {4};
\node [circle,blue,draw,scale=\vertexscale] (5) at (-21.66896,27.61963) {5};
\node [circle,blue,draw,scale=\vertexscale] (6) at (0.43410,57.47854) {6};
\node [circle,blue,draw,scale=\vertexscale] (7) at (1.18852,-58.92225) {7};
\node [circle,blue,draw,scale=\vertexscale] (8) at (-35.29971,-35.29971) {8};
\node [circle,black,draw,scale=\vertexscale] (9) at (-15.16392,-15.16392) {t=9};
\node [circle,blue,draw,scale=\vertexscale] (10) at (-58.92225,1.18852) {v=10};
\tkzDefPoint(-100.00000,-0.00000){11}
\tkzDefPoint(-70.71068,-70.71068){12}
\tkzDefPoint(0.00000,-100.00000){13}
\tkzDefPoint(100.00000,0.00000){14}
\tkzDefPoint(-0.00000,100.00000){15}
\tkzDefPoint(-70.71068,70.71068){16}
\tkzDefPoint(70.71068,70.71068){17}
\tkzDefPoint(70.71068,-70.71068){18}
\draw [black] (1) to (2);
\draw [black] (1) to (6);
\draw [black] (2) to (14);
\node [draw=none,fill=none,scale=\labelscale] () at (111.00000,0.00000) {10};
\draw [black] (2) to (3);
\draw [black] (3) to (7);
\draw [black] (3) to (4);
\draw [black] (4) to (5);
\draw [black] (4) to (9);
\draw [black] (5) to (6);
\draw [black] (5) to (10);
\draw [black] (6) to (15);
\node [draw=none,fill=none,scale=\labelscale] () at (-0.00000,111.00000) {7};
\draw [black] (7) to (8);
\draw [black] (7) to (13);
\node [draw=none,fill=none,scale=\labelscale] () at (0.00000,-111.00000) {6};
\draw [black] (8) to (9);
\draw [black] (8) to (10);
\draw [black] (10) to (11);
\node [draw=none,fill=none,scale=\labelscale] () at (-111.00000,-0.00000) {2};
\tkzDefPoint(-68.55786,72.79986){A}
\tkzDefPoint(68.55786,72.79986){B}
\tkzDefPoint(0.0,0.0){C}
\tkzDrawArc[<-,line width=0.9mm, red](C,B)(A)
\tkzDefPoint(72.79986,68.55786){A}
\tkzDefPoint(72.79986,-68.55786){B}
\tkzDefPoint(0.0,0.0){C}
\tkzDrawArc[<-,line width=0.9mm, blue](C,B)(A)
\tkzDefPoint(68.55786,-72.79986){A}
\tkzDefPoint(-68.55786,-72.79986){B}
\tkzDefPoint(0.0,0.0){C}
\tkzDrawArc[->,line width=0.9mm, red](C,B)(A)
\tkzDefPoint(-72.79986,-68.55786){A}
\tkzDefPoint(-72.79986,68.55786){B}
\tkzDefPoint(0.0,0.0){C}
\tkzDrawArc[->,line width=0.9mm, blue](C,B)(A)
\node [black,circle,draw,fill=white,scale=0.75,line width=1mm] (12) at (-70.71068,-70.71068) {};
\node [black,circle,draw,fill=white,scale=0.75,line width=1mm] (16) at (-70.71068,70.71068) {};
\node [black,circle,draw,fill=white,scale=0.75,line width=1mm] (17) at (70.71068,70.71068) {};
\node [black,circle,draw,fill=white,scale=0.75,line width=1mm] (18) at (70.71068,-70.71068) {};
\end{tikzpicture}
          }
	\caption{The Petersen graph with one edge removed gives a strong plugin $P_{3,+2}$ for $s=1$, $t=9$ and $v=10$. It can be easily checked by hand that $1,2,\dots ,9$ is the unique hamiltonian path from $s$ to $t$ if $v$ is removed and of course there is no hamiltonian path from $s$ to $t$ without removing $v$ as it would imply a hamiltonian cycle in the Petersen graph. When used as a plugin, the degrees in the copy of  $P_{3,+2}$ are $3$ and $d+2$ if the vertex identified with $v$ has degree $d$.}
	\label{fig:deg3plugin}
\end{figure}

Figure~\ref{fig:uhc_3_4} shows one of the five smallest uniquely hamiltonian graphs $G$ with $M_{deg(G)}=\{3,4\}$ (see \cite{fewHC}). By
using the strong plugin given in Figure~\ref{fig:deg3plugin} to an edge not on the hamiltonian cycle and incident to a vertex of degree $4$, we can increase the
degree of that vertex by $2$.  Doing that recursively, we can increase the degree of that vertex to any even degree. Applying the plugin to an edge incident
with two vertices of degree $3$, we can increase the degree of one of them to $5$ and recursively to any odd degree. As the number of vertices of degree $3$ can be increased
by replacing a vertex by a triangle -- and keeping the graph uniquely hamiltonian -- we can conclude that there are infinitely many (3-connected) uniquely hamiltonian graphs $G_M$ for any degree set $M=\{3,d_1,d_2,\dots ,d_k\}$
containing one or two even degrees. If we take two graphs realizing degree sets $M,M'$, remove one vertex of degree $3$ in each of them and connect the neighbours in a way that the parts of
the unique hamiltonian cycles are connected to each other, we get a graph $G_{M\cup M'}$ realizing the degree set $M\cup M'$. This way we get that for each $M=\{3,d_1,d_2,\dots ,d_k\}$ with at least
one even element there are infinitely many uniquely hamiltonian graphs $G_M$ realizing it.

Assume now that for a degree set $M=\{3,d_1,d_2,\dots ,d_k\}$ containing an even degree a partition $D_1,D_2$ of $\{d_1,d_2,\dots ,d_k\}$ is given.
There is a uniquely hamiltonian graph $G_M$ realizing $M$. If $D_2$ is empty, we can recursively replace vertices of degree $3$ by triangles to get an infinite sequence of uniquely hamiltonian
graphs realizing $M$ and having the same number of vertices of degree $d\in D_1$. If $D_2$ contains an even degree, we can make arbitrarily many copies of a graph realizing $D_2\cup \{3\}$ and recursively
combine them in the way described above with $G_M$. The result has a constant number of vertices with degree in $D_1$ and at least a constant fraction of vertices with degree in $D_2$. If finally
$D_2$ does not contain a vertex of even degree, we can recursively replace vertices of degree $3$ in $G_M$ by triangles, so that for each $k\in \mathbb{N}$ and each $d\in D_2$ we can use the plugin to
make $k$ vertices with degree $d$. As all graphs constructed in this proof are $3$-connected, this final construction proves that $M$ is strongly uhc-realizable.

\end{proof}

The repeated application of $P_{3,+2}$ does not give smallest possible graphs with this degree sequence -- in fact not even smallest graphs constructed by using plugins. There is e.g.\ a plugin on $15$
vertices increasing the degree of the identified vertex by $4$ and increasing the number of vertices by $13$ instead of $16$ when applying $P_{3,+2}$ twice.

For minimum degree $4$, it is unfortunately not so easy to give a strong plugin, but we have to construct it, starting from weak plugins.
    
We do not only want to splice one edge in a graph $G$, but each edge in some set of edges. This is in general not possible, if the edges only satisfy condition
(i) of Lemma~\ref{lem:op} or Corollary~\ref{cor:op} for $G$:
if $z$ is a vertex, so that $G[V\setminus \{z\}]$ has no hamiltonian cycle or hamiltonian path between two vertices $a,b$, it is possible that after splicing an edge $\{x,y\}$
not even close to $z$, the result $O(x,y,{\cal P})$ has a hamiltonian path or cycle in the graph with $z$ removed. If on the other hand we have
a set $E_O$ of candidate edges $\{x_1,y_1\},\dots ,\{x_k,y_k\}$ to be spliced with different $x_i$ in different triangles, or the $y_i$ are one of the starting points $s,t$ of the unique
hamiltonian path, these properties are
preserved after splicing an edge in $E_O$. This implies that in that situation we can apply the splicing operation also with a weak H-plugin to all edges
simultaneously or in any order and still draw the conclusions of Lemma~\ref{lem:op} or Corollary~\ref{cor:op}.

\bigskip

Let $G=(V,E)$ be a graph with $s,t\in V$, $v\not\in V$ and a unique hamiltonian path from $s$ to $t$. For a set $V'\subseteq V$ we define $W_{V'}(G)$ as the graph
obtained from $G$ by adding the vertex $v$ and connecting it to all vertices in $V'$ -- or formally: $W_{V'}(G)=(V_W,E_W)$ with $V_W=V\cup \{v\}$,
$E_W=E\cup \{\{v,w\} | w\in V'\}$. For a set $\{4,d_1,\dots ,d_k\}$ with $4<d_1 < d_2\dots <d_k$ we call $G=(V,E)$ a $\{4,d_1,\dots ,d_k\}$-seed, if there is a
set $V'\subseteq V$, so that if $W_{V'}(G)$ is used for splicing an edge with both endpoints of degree $3$ in a 3-connected graph, the result is $3$-connected
and the set of degrees that occur in the copy of $W_{V'}(G)$ is exactly $\{4,d_1,\dots ,d_k\}$.

\begin{remark}\label{rem:plugin}
  Let $G=(V,E)$ be a graph with a unique hamiltonian path from $s\in V$ to $t\in V$, $V'\subseteq V$ and $v\not\in V$. Then we have:

\begin{description}
  \item[(i)] $W_{V'}(G)$ is an H-plugin.
  \item[(ii)] If $V'=\{x,y\}$ and $x,y$ are the endpoints of an edge not on the unique hamiltonian path from $s$ to $t$, then $W_{\{x,y\}}(G)$ is a strong H-plugin.
\end{description}

\end{remark}

This remark follows immediately from the definitions of H-plugin and strong H-plugin and the fact that a hamiltonian path from $s$ to $t$ containing $v$ would
imply a hamiltonian path in $G$ containing the edge $\{x,y\}$.

If $G=(V,E)$ is an $M$-seed for some set $M$ and $V'\subseteq V$ is a set proving this, then the plugin $W_{V'}(G)$ is also called
an $M$-plugin.

We will use seeds to construct weak H-plugins, use those to construct strong H-plugins, and the strong H-plugins to construct uniquely hamiltonian graphs with certain sets of degrees.

We will first use the splicing operation to show how weak H-plugins imply the existence of certain strong H-plugins:

\begin{lemma}\label{lem:strong}
  
  If for a set $M=\{4,d_1,\dots ,d_k\}$ with $4<d_1<\dots <d_k\}$ there is an $M$-seed $S$, then there is a strong H-plugin ${\cal P}_{M}^{str}$, so that when  ${\cal P}_{M}^{str}$ is used
  for splicing an edge with both endpoints of degree $3$, the set of vertex degrees of the vertices in the copy of ${\cal P}_{M}^{str}$ is exactly $M$.\\
  If there are infinitely many $M$-seeds, each with for $1\le i\le k$  exactly $C_i$ vertices with degree $d_i$ when used for splicing an edge with both endpoints of degree $3$, then there
  are infinitely many strong $M$-plugins ${\cal P}_{M}^{str}$, each with $5C_i$ vertices with degree $d_i$ after splicing.

  \end{lemma}

\begin{proof}

        \begin{figure}[tb]
	\centering
        \resizebox{0.6\textwidth}{0.6\textwidth}
        {
          \begin{tikzpicture}[scale=0.06]
\def\vertexscale{1.20}
\def\labelscale{1.30}
\node [circle,blue,draw,scale=\vertexscale] (1) at (26.14974,5.81753) {s=1};
\node [circle,blue,draw,scale=\vertexscale] (2) at (21.20151,30.48761) {2};
\node [circle,black,draw,scale=\vertexscale] (3) at (-7.07195,36.47579) {3};
\node [circle,blue,draw,scale=\vertexscale] (4) at (21.75576,61.75533) {4};
\node [circle,blue,draw,scale=\vertexscale] (5) at (26.84851,-56.62280) {5};
\node [circle,blue,draw,scale=\vertexscale] (6) at (56.62280,-26.84851) {6};
\node [circle,blue,draw,scale=\vertexscale] (7) at (-61.75533,-21.75576) {7};
\node [circle,black,draw,scale=\vertexscale] (8) at (-36.47579,7.07195) {8};
\node [circle,blue,draw,scale=\vertexscale] (9) at (-30.48761,-21.20151) {9};
\node [circle,blue,draw,scale=\vertexscale] (10) at (-5.81753,-26.14974) {10};
\node [circle,blue,draw,scale=\vertexscale] (11) at (-9.96026,-56.90049) {11};
\node [circle,blue,draw,scale=\vertexscale] (12) at (-33.12632,65.20777) {12};
\node [circle,black,draw,scale=\vertexscale] (13) at (-35.55971,35.55971) {13};
\node [circle,blue,draw,scale=\vertexscale] (14) at (-65.20777,33.12632) {14};
\node [circle,blue,draw,scale=\vertexscale] (15) at (56.90049,9.96026) {t=15};
\tkzDefPoint(-96.59258,25.88190){16}
\tkzDefPoint(-70.71068,-70.71068){17}
\tkzDefPoint(-96.59258,-25.88190){18}
\tkzDefPoint(-25.88190,-96.59258){19}
\tkzDefPoint(25.88190,-96.59258){20}
\tkzDefPoint(96.59258,25.88190){21}
\tkzDefPoint(96.59258,-25.88190){22}
\tkzDefPoint(-25.88190,96.59258){23}
\tkzDefPoint(25.88190,96.59258){24}
\tkzDefPoint(-70.71068,70.71068){25}
\tkzDefPoint(70.71068,70.71068){26}
\tkzDefPoint(70.71068,-70.71068){27}
\draw [black] (1) to (2);
\draw [black] (1) to (15);
\draw [->, >= triangle 90, thick, black] (10) to (1);
\draw [black] (2) to (3);
\draw [<->, >= triangle 90, thick, black] (2) to (4);
\draw [black] (3) to (8);
\draw [black] (3) to (13);
\draw [black] (3) to (4);
\draw [black] (4) to (24);
\node [draw=none,fill=none,scale=\labelscale] () at (27.17600,101.42221) {5};
\draw [black] (5) to (20);
\node [draw=none,fill=none,scale=\labelscale] () at (27.17600,-101.42221) {4};
\draw [black] (5) to (11);
\draw [black] (5) to (6);
\draw [black] (6) to (22);
\node [draw=none,fill=none,scale=\labelscale] () at (101.42221,-27.17600) {7};
\draw [->, >= triangle 90, thick, black] (6) to (15);
\draw [<->, >= triangle 90, thick, black] (9) to (7);
\draw [black] (7) to (18);
\node [draw=none,fill=none,scale=\labelscale] () at (-101.42221,-27.17600) {6};
\draw [black] (7) to (8);
\draw [black] (8) to (13);
\draw [black] (8) to (9);
\draw [black] (9) to (10);
\draw [black] (10) to (11);
\draw [black] (11) to (19);
\node [draw=none,fill=none,scale=\labelscale] () at (-27.17600,-101.42221) {12};
\draw [black] (12) to (23);
\node [draw=none,fill=none,scale=\labelscale] () at (-27.17600,101.42221) {11};
\draw [black] (12) to (13);
\draw [<->, >= triangle 90, thick, black] (12) to (14);
\draw [black] (13) to (14);
\draw [black] (14) to (16);
\node [draw=none,fill=none,scale=\labelscale] () at (-101.42221,27.17600) {15};
\draw [black] (15) to (21);
\node [draw=none,fill=none,scale=\labelscale] () at (101.42221,27.17600) {14};
\tkzDefPoint(-68.55786,72.79986){A}
\tkzDefPoint(68.55786,72.79986){B}
\tkzDefPoint(0.0,0.0){C}
\tkzDrawArc[<-,line width=0.9mm, red](C,B)(A)
\tkzDefPoint(72.79986,68.55786){A}
\tkzDefPoint(72.79986,-68.55786){B}
\tkzDefPoint(0.0,0.0){C}
\tkzDrawArc[<-,line width=0.9mm, blue](C,B)(A)
\tkzDefPoint(68.55786,-72.79986){A}
\tkzDefPoint(-68.55786,-72.79986){B}
\tkzDefPoint(0.0,0.0){C}
\tkzDrawArc[->,line width=0.9mm, red](C,B)(A)
\tkzDefPoint(-72.79986,-68.55786){A}
\tkzDefPoint(-72.79986,68.55786){B}
\tkzDefPoint(0.0,0.0){C}
\tkzDrawArc[->,line width=0.9mm, blue](C,B)(A)
\node [black,circle,draw,fill=white,scale=0.75,line width=1mm] (17) at (-70.71068,-70.71068) {};
\node [black,circle,draw,fill=white,scale=0.75,line width=1mm] (25) at (-70.71068,70.71068) {};
\node [black,circle,draw,fill=white,scale=0.75,line width=1mm] (26) at (70.71068,70.71068) {};
\node [black,circle,draw,fill=white,scale=0.75,line width=1mm] (27) at (70.71068,-70.71068) {};
\end{tikzpicture}
          }
	\caption{The graph $P^-$ from \cite{4_14}, which has  two hamiltonian cycles: $1,2,3, \dots ,15$ and $1,2,3,4,5,11,12,13,14,15,6,7,8,9,10$. As only one of them contains the edge $\{1,15\}$ it has
          a unique hamiltonian path $1,2,\dots ,15$ from $s=1$ to $t=15$. Edges with both endpoints of degree $3$ to which the splicing operation with a weak
          H-plugin can be applied while the uniqueness of the hamiltonian path is preserved, are drawn as arrows pointing at the vertices which can or must be chosen as
          $y$.  }
	\label{fig:pminus}
        \end{figure}

        Let ${\cal P}_{M}$ be the (weak) $M$-plugin constructed from $S$ as described in  Remark~\ref{rem:plugin} and assume that for $1\le i\le k$  exactly $C_i$ vertices
        in ${\cal P}_{M}$ have degree $d_i$ when it is used for splicing an edge.

        Figure~\ref{fig:pminus} shows the graph $P^-$ with a unique hamiltonian path $1,2,\dots ,15$ from $s=1$ to $t=15$ (given in \cite{4_14}).  Edges with
        both endpoints of degree $3$ to which the splicing operation with a weak H-plugin can be applied in a way that there is still a unique hamiltonian path
        between $s$ and $t$ are drawn as arrows pointing at the vertices which can or must be chosen as the vertex $y$ in the operation. If we splice these
        edges with ${\cal P}_{M}$, we get a graph with $5C_1,\dots ,5C_k$ vertices with degrees $d_1,\dots ,d_k$, $2$ vertices (the vertices $5$ and
        $11$) with degree $3$, and all other vertices with degree $4$. Due to Corollary~\ref{cor:op}, this graph still has a unique hamiltonian path from $s$ to
        $t$ not containing the edge $\{5,11\}$. If we remove the edge between $s$ and $t$, add a new vertex $v$, and connect it to the vertices $5$ and $11$,
        due to Remark~\ref{rem:plugin} we get a strong H-plugin ${\cal P}_{d}^{str}$. Each of the vertices $s$ and $t$ now has a degree $d'-1$ with $d'\in M$,
        so when applied in a
        splicing operation the degree is again $d'$. Before splicing, $v$ has degree $2$, so splicing an edge with both endpoints of degree $3$ it gets degree $4$.
        All other vertices have a degree in $M$. If we apply the
        ${\cal P}_{d}^{str}$-splice to an edge with both endpoints of degree $3$, one of them is deleted and the other one is identified with $v$ and gets degree
        $4$. If there are infinitely many H-plugins ${\cal P}_{M}$, each with $C_1,\dots ,C_k$ vertices with degrees $d_1,\dots ,d_k$, we get infinitely many strong H-plugins
        ${\cal P}_{M}^{str}$ with $5C_1,\dots ,5C_k$ vertices with degrees $4<d_1<\dots <d_k$.

\end{proof}

\begin{lemma}\label{lem:gk}

  For each $k\in \mathbb{N}$ there are $3$-connected uniquely hamiltonian graphs $G_k=(V_k,E_k)$ with $M_{deg}(G_k)=\{3,4\}$, so that the edges not on the
  hamiltonian cycle form a 2-regular subgraph containing all vertices of degree $4$ together with a matching of size at least $k$ containing all vertices of degree $3$.
  
\end{lemma}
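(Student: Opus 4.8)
The plan is to build all the $G_k$ from a single base graph by iterating one local operation. First, the relevant structure: in a uniquely hamiltonian graph $G$ with $M_{deg}(G)=\{3,4\}$ and Hamiltonian cycle $C_H$, a degree-$3$ vertex lies on exactly one edge off $C_H$ and a degree-$4$ vertex on exactly two, so the edges off $C_H$ always form a disjoint union of cycles through degree-$4$ vertices and paths whose endpoints have degree $3$ and whose interior vertices have degree $4$. What the lemma asks for is the extreme case in which every such path is a single edge — equivalently, the degree-$4$ vertices are covered by disjoint off-cycle cycles and the degree-$3$ vertices by an off-cycle perfect matching — together with the requirement that there be at least $2k$ vertices of degree $3$ (note that such a graph automatically has at least three vertices of degree $4$, since a simple $2$-regular graph needs at least three vertices).

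For the base graph $G_1$ I would exhibit one small explicit graph that is $3$-connected, uniquely hamiltonian, has $M_{deg}=\{3,4\}$, has its degree-$4$ vertices covered by disjoint off-cycle cycles, and has a nonempty off-cycle matching on its degree-$3$ vertices; such a graph is readily found by computer and all its required properties (uniqueness of the Hamiltonian cycle and $3$-connectivity) can then be confirmed directly. One concrete route: start from $U_{3,4}$ of Figure~\ref{fig:uhc_3_4}; since it has only two degree-$4$ vertices, its off-cycle edges consist of one or two paths running through those two vertices plus a perfect matching on the remaining degree-$3$ vertices, and adding the edge(s) that close the path(s) into a cycle (cycles) promotes the path endpoints to degree $4$ and leaves the matching untouched. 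Adding edges cannot destroy $3$-connectivity, and any new Hamiltonian cycle of the modified graph would use at least one added edge, hence on deleting the added edges would give a Hamiltonian path, or a pair of vertex-disjoint spanning paths, in $U_{3,4}$ between the relevant endpoints — only finitely many configurations, so a routine computer check decides whether this particular $G_1$ works, and if not one takes another small base graph.

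The operation that grows $G_1$ is a \emph{triangle blow-up} at a degree-$3$ vertex $x$ with $C_H$-neighbours $a,b$ and off-cycle neighbour $c$: delete $x$, add a triangle on new vertices $x_1,x_2,x_3$, and add the edges $\{x_1,a\}$, $\{x_2,b\}$, $\{x_3,c\}$. Then $x_1,x_2,x_3$ have degree $3$ and every other degree is unchanged, so $M_{deg}$ stays $\{3,4\}$; $3$-connectivity is preserved (a direct check: a $2$-cut of the new graph would yield one of $G$, the triangle playing the role of the single vertex $x$); and replacing the stretch $a,x,b$ of $C_H$ by $a,x_1,x_3,x_2,b$ produces a Hamiltonian cycle $C_H'$ of the new graph in which the only new off-cycle edges are $\{x_1,x_2\}$ and $\{x_3,c\}$. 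For uniqueness, the only edges leaving $\{x_1,x_2,x_3\}$ are $\{x_1,a\}$, $\{x_2,b\}$, $\{x_3,c\}$, so any Hamiltonian cycle uses exactly two of them and traverses $\{x_1,x_2,x_3\}$ along a single path; contracting that path to a point turns the Hamiltonian cycle into one of the old graph, which by uniqueness is $C_H$ when the two used edges are $\{x_1,a\}$ and $\{x_2,b\}$ (recovering $C_H'$), and in the remaining two cases would be a Hamiltonian cycle of the old graph through the off-cycle edge $\{x,c\}$, which does not exist. Hence the new graph is again uniquely hamiltonian; its off-cycle edges are those of the old one with $\{x,c\}$ replaced by $\{x_3,c\}$ and with $\{x_1,x_2\}$ added, so the cycles on the degree-$4$ vertices are untouched and the matching on the degree-$3$ vertices has gained exactly one edge (and two vertices).

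Finally, starting from $G_1$ (whose matching has some size $m_1\ge 1$) and applying the triangle blow-up $\max(0,k-m_1)$ times yields a $3$-connected uniquely hamiltonian $G_k$ with $M_{deg}(G_k)=\{3,4\}$ whose off-cycle edges form a $2$-regular subgraph through all degree-$4$ vertices together with a matching of size $\ge k$ through all degree-$3$ vertices; iterating further even produces infinitely many such graphs for each $k$. The only genuinely non-routine ingredient is producing and certifying the base graph $G_1$; everything after that is the mechanical blow-up just described, and that certification is where I would expect the (small) computer check to be needed.
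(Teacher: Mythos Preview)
Your iterative step --- replacing a degree-$3$ vertex by a triangle --- is exactly what the paper does, and your argument that this preserves unique hamiltonicity, $3$-connectivity, and the off-cycle decomposition (one new matching edge gained, the cycles on the degree-$4$ vertices untouched) is correct and in fact more carefully spelled out than the paper's one-line appeal to this standard operation.

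The gap is in the base case. You propose to manufacture $G_1$ from $U_{3,4}$ by adding the edge(s) that close its off-cycle paths through the two degree-$4$ vertices into cycles, but adding edges can only \emph{create} Hamiltonian cycles, and you do not verify that it does not do so here; your fallback (``if not, take another small base graph'') leaves the existence of $G_1$ as an unproved assertion. The paper fills this gap with an explicit construction: take two copies of the graph $P^-$ of Figure~\ref{fig:pminus}, which has exactly two Hamiltonian cycles, and glue them by the standard two-to-one trick of \cite{twotoone} --- delete from each copy a cubic vertex that the two cycles traverse differently, and reconnect the six dangling ends so that exactly one combined Hamiltonian cycle survives. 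The crucial observation is that the three degree-$4$ vertices of $P^-$ already form a triangle lying off \emph{both} of its Hamiltonian cycles, so whichever combined cycle survives, the glued graph has its six degree-$4$ vertices sitting on two off-cycle triangles, while each cubic vertex carries exactly one off-cycle edge, giving the required perfect matching. Adopting this construction for $G_1$ would make your argument complete without any additional computer check.
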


\begin{proof}
  
  We can apply a well known technique from \cite{twotoone} to obtain a uniquely hamiltonian graph from a graph with two hamiltonian cycles that contains a cubic vertex that
  is passed by the two hamiltonian cycles in different ways.  We take two copies
  of $P^-$ and in each of them an arbitrary cubic vertex that is traversed by the two hamiltonian cycles in two different ways. Say these vertices are $v$ and
  $v'$, that the neighbours are $a,b,c$, resp.\ $a',b',c'$ and that the hamiltonian cycles pass $v$ as $a, v, b$ and $a, v, c$ (and accordingly for
  $v'$). Removing $v$ and $v'$ and adding the edges $\{a,c'\},\{b,b'\},\{c,a'\}$, only one hamiltonian cycle remains -- using the paths $a$ to $c$ in one copy
  and $c'$ to $a'$ in the other.

As in both hamiltonian cycles the vertices of degree $4$ are traversed in a way so that the edges not on the hamiltonian cycle and incident with the 4-valent
vertices form a triangle, the result will in each case have a unique hamiltonian cycle with two triangles of edges not on the hamiltonian cycle containing all
$6$ vertices of degree $4$. As each cubic vertex has exactly one edge not on the hamiltonian cycle, these edges form the required matching. Starting from this
graph, we can replace vertices of degree $3$ by triangles to increase the number of cubic vertices and therefore also the size of the matching until we have a
matching of size at least $k$.

\end{proof}

We get the following theorem as an immediate consequence:

\begin{theorem}\label{thm:set}

  Let $M \subset \{4,5,6\dots\}$ with $4\in M$ be a set, so that there are sets $M_1, M_2, \dots ,M_k$ with $\bigcup_{i=1}^k M_i = M$ and for $1\le i \le k$ there is an $M_i$-seed $S_i$.
  Then $M$ is uhc-realizable.

  If $|M|>1$, $|M_i|=2$ for $1\le i \le k$, and for each $i$ there are infinitely many $M_i$-seeds with the same number of vertices with degree different from $4$, then $M$ is also strongly uhc-realizable.
  
  %If there is a $4$-seed $S_4$, or a $d_1$-seed $S_{d_1}$ for $d_1> 4$, then any set $M=\{d_0=4,d_1,d_2,\dots ,d_k\}$ with $4< d_1 < d_2 < \dots < d_k$ is
  %uhc-realizable.  If $k\ge 1$, $M$ is also strongly uhc-realizable.

\end{theorem}

\begin{proof}

  Given the set $M$, we can take any uniquely hamiltonian graph $G_{k'}$ from Lemma~\ref{lem:gk}
  with $k'\ge k$, $k'>0$ and splice the 
edges of the matching using each of the strong H-plugins ${\cal P}_{M_1}^{str}$,\dots ,${\cal P}_{M_k}^{str}$ at least once.
This removes all vertices of degree $3$ or increases their degree to
$4$.  Furthermore outside the H-plugins only degree $4$ occurs and in the H-plugins exactly all vertex degrees in $M$ occur, while the graph has still one
unique hamiltonian cycle.

To show that $M$ is strongly uhc-realizable for $k\ge 1$, assume a partition $D_1,D_2$ of $M\setminus \{4\}$ to be given.
If $D_2=\emptyset$, to construct the sequence of graphs we can use increasingly large strong H-plugins -- keeping the numbers of vertices of degree $d$ constant for $d\in \{d_1,\dots,d_k\}$.
If $D_2\not=\emptyset$, we can use graphs $G_{k'}$ for increasingly large $k'$ and use the same arbitrarily large number of copies of strong H-plugins ${\cal P}_d^{str}$ for each $d\in D_2$.

\end{proof}

\begin{figure}[tb]
	\centering
        \resizebox{0.65\textwidth}{0.65\textwidth}
        {
          \begin{tikzpicture}[scale=0.065]
\def\vertexscale{1.00}
\def\labelscale{1.20}
\node [circle,red,draw,scale=\vertexscale] (1) at (-35.04807,18.13716) {s=1};
\node [circle,black,draw,scale=\vertexscale] (2) at (16.85007,38.37066) {2};
\node [circle,black,draw,scale=\vertexscale] (3) at (7.90152,59.13273) {3};
\node [circle,black,draw,scale=\vertexscale] (4) at (-21.21792,-70.27781) {4};
\node [circle,blue,draw,scale=\vertexscale] (5) at (-5.04854,-49.67057) {5};
\node [circle,black,draw,scale=\vertexscale] (6) at (19.27363,-65.38332) {6};
\node [circle,black,draw,scale=\vertexscale] (7) at (70.12977,-31.96580) {7};
\node [circle,black,draw,scale=\vertexscale] (8) at (57.08462,-8.75877) {8};
\node [circle,blue,draw,scale=\vertexscale] (9) at (32.03659,-24.37914) {9};
\node [circle,blue,draw,scale=\vertexscale] (10) at (38.93179,-50.58079) {10};
\node [circle,black,draw,scale=\vertexscale] (11) at (-67.39295,5.74440) {11};
\node [circle,black,draw,scale=\vertexscale] (12) at (55.91587,22.62318) {12};
\node [circle,blue,draw,scale=\vertexscale] (13) at (21.65484,14.90236) {13};
\node [circle,black,draw,scale=\vertexscale] (14) at (-2.11924,-19.10702) {14};
\node [circle,black,draw,scale=\vertexscale] (15) at (-36.59874,-23.93519) {15};
\node [circle,black,draw,scale=\vertexscale] (16) at (-31.51319,72.98481) {16};
\node [circle,blue,draw,scale=\vertexscale] (17) at (-27.63136,51.58083) {17};
\node [circle,blue,draw,scale=\vertexscale] (18) at (-5.90103,68.57550) {t=18};
\tkzDefPoint(79.33533,60.87614){19}
\tkzDefPoint(-38.26834,-92.38795){20}
\tkzDefPoint(60.87614,79.33533){21}
\tkzDefPoint(-19.50903,-98.07853){22}
\tkzDefPoint(0.00000,-100.00000){23}
\tkzDefPoint(19.50903,-98.07853){24}
\tkzDefPoint(-52.24986,85.26402){25}
\tkzDefPoint(-64.94480,76.04060){26}
\tkzDefPoint(-76.04060,64.94480){27}
\tkzDefPoint(-85.26402,52.24986){28}
\tkzDefPoint(-23.34454,97.23699){29}
\tkzDefPoint(-7.84591,99.69173){30}
\tkzDefPoint(7.84591,99.69173){31}
\tkzDefPoint(23.34454,97.23699){32}
\tkzDefPoint(-99.14449,-13.05262){33}
\tkzDefPoint(-99.14449,13.05262){34}
\tkzDefPoint(-52.24986,-85.26402){35}
\tkzDefPoint(-64.94480,-76.04060){36}
\tkzDefPoint(-76.04060,-64.94480){37}
\tkzDefPoint(-85.26402,-52.24986){38}
\tkzDefPoint(85.26402,-52.24986){39}
\tkzDefPoint(76.04060,-64.94480){40}
\tkzDefPoint(64.94480,-76.04060){41}
\tkzDefPoint(52.24986,-85.26402){42}
\tkzDefPoint(98.07853,19.50903){43}
\tkzDefPoint(100.00000,0.00000){44}
\tkzDefPoint(98.07853,-19.50903){45}
\tkzDefPoint(92.38795,38.26834){46}
\tkzDefPoint(-92.38795,-38.26834){47}
\tkzDefPoint(-38.26834,92.38795){48}
\tkzDefPoint(92.38795,-38.26834){49}
\tkzDefPoint(38.26834,-92.38795){50}
\tkzDefPoint(-92.38795,38.26834){51}
\tkzDefPoint(38.26834,92.38795){52}
\draw [black] (1) to (26);
\node [draw=none,fill=none,scale=\labelscale] () at (-68.19205,79.84263) {7};
\draw [black] (1) to (17);
\draw [black] (1) to (3);
\draw [black] (1) to (2);
\draw [black] (1) to (15);
\draw [black] (1) to (11);
\draw [black] (2) to (3);
\draw [black] (2) to (21);
\node [draw=none,fill=none,scale=\labelscale] () at (63.91995,83.30210) {6};
\draw [black] (2) to (13);
\draw [black] (3) to (32);
\node [draw=none,fill=none,scale=\labelscale] () at (24.51176,102.09884) {4};
\draw [black] (3) to (18);
\draw [black] (4) to (5);
\draw [black] (4) to (23);
\node [draw=none,fill=none,scale=\labelscale] () at (0.00000,-105.00000) {8};
\draw [black] (4) to (22);
\node [draw=none,fill=none,scale=\labelscale] () at (-20.48448,-102.98245) {12};
\draw [black] (4) to (35);
\node [draw=none,fill=none,scale=\labelscale] () at (-54.86235,-89.52722) {3};
\draw [black] (5) to (14);
\draw [black] (5) to (6);
\draw [black] (6) to (10);
\draw [black] (6) to (42);
\node [draw=none,fill=none,scale=\labelscale] () at (54.86235,-89.52722) {2};
\draw [black] (6) to (24);
\node [draw=none,fill=none,scale=\labelscale] () at (20.48448,-102.98245) {7};
\draw [black] (7) to (8);
\draw [black] (7) to (45);
\node [draw=none,fill=none,scale=\labelscale] () at (102.98245,-20.48448) {6};
\draw [black] (7) to (39);
\node [draw=none,fill=none,scale=\labelscale] () at (89.52722,-54.86235) {16};
\draw [black] (7) to (40);
\node [draw=none,fill=none,scale=\labelscale] () at (79.84263,-68.19205) {1};
\draw [black] (8) to (12);
\draw [black] (8) to (44);
\node [draw=none,fill=none,scale=\labelscale] () at (105.00000,0.00000) {4};
\draw [black] (8) to (9);
\draw [black] (9) to (10);
\draw [black] (9) to (14);
\draw [black] (10) to (41);
\node [draw=none,fill=none,scale=\labelscale] () at (68.19205,-79.84263) {11};
\draw [black] (11) to (38);
\node [draw=none,fill=none,scale=\labelscale] () at (-89.52722,-54.86235) {16};
\draw [black] (11) to (33);
\node [draw=none,fill=none,scale=\labelscale] () at (-104.10171,-13.70525) {12};
\draw [black] (11) to (27);
\node [draw=none,fill=none,scale=\labelscale] () at (-79.84263,68.19205) {10};
\draw [black] (12) to (13);
\draw [black] (12) to (19);
\node [draw=none,fill=none,scale=\labelscale] () at (83.30210,63.91995) {11};
\draw [black] (12) to (43);
\node [draw=none,fill=none,scale=\labelscale] () at (102.98245,20.48448) {4};
\draw [black] (13) to (14);
\draw [black] (14) to (15);
\draw [black] (15) to (36);
\node [draw=none,fill=none,scale=\labelscale] () at (-68.19205,-79.84263) {18};
\draw [black] (15) to (37);
\node [draw=none,fill=none,scale=\labelscale] () at (-79.84263,-68.19205) {16};
\draw [black] (16) to (17);
\draw [black] (16) to (25);
\node [draw=none,fill=none,scale=\labelscale] () at (-54.86235,89.52722) {7};
\draw [black] (16) to (29);
\node [draw=none,fill=none,scale=\labelscale] () at (-24.51176,102.09884) {11};
\draw [black] (16) to (30);
\node [draw=none,fill=none,scale=\labelscale] () at (-8.23821,104.67632) {15};
\draw [black] (17) to (18);
\draw [black] (18) to (31);
\node [draw=none,fill=none,scale=\labelscale] () at (8.23821,104.67632) {15};
\tkzDefPoint(-85.26402,52.24986){A}
\tkzDefPoint(-86.67200,30.69216){B}
\tkzDefPoint(-99.14449,13.05262){C}
\tkzCircumCenter(A,B,C)\tkzGetPoint{D}
\tkzDrawArc[black](D,C)(A)
\tkzDefPoint(93.49426,35.47990){A}
\tkzDefPoint(93.49426,-35.47990){B}
\tkzDefPoint(0.0,0.0){C}
\tkzDrawArc[<-,line width=0.9mm, red](C,B)(A)
\tkzDefPoint(91.19850,-41.02235){A}
\tkzDefPoint(41.02235,-91.19850){B}
\tkzDefPoint(0.0,0.0){C}
\tkzDrawArc[<-,line width=0.9mm, green](C,B)(A)
\tkzDefPoint(35.47990,-93.49426){A}
\tkzDefPoint(-35.47990,-93.49426){B}
\tkzDefPoint(0.0,0.0){C}
\tkzDrawArc[->,line width=0.9mm, red](C,B)(A)
\tkzDefPoint(-41.02235,-91.19850){A}
\tkzDefPoint(-91.19850,-41.02235){B}
\tkzDefPoint(0.0,0.0){C}
\tkzDrawArc[<-,line width=0.9mm, orange](C,B)(A)
\tkzDefPoint(-93.49426,-35.47990){A}
\tkzDefPoint(-93.49426,35.47990){B}
\tkzDefPoint(0.0,0.0){C}
\tkzDrawArc[<-,line width=0.9mm, blue](C,B)(A)
\tkzDefPoint(-91.19850,41.02235){A}
\tkzDefPoint(-41.02235,91.19850){B}
\tkzDefPoint(0.0,0.0){C}
\tkzDrawArc[->,line width=0.9mm, green](C,B)(A)
\tkzDefPoint(-35.47990,93.49426){A}
\tkzDefPoint(35.47990,93.49426){B}
\tkzDefPoint(0.0,0.0){C}
\tkzDrawArc[->,line width=0.9mm, orange](C,B)(A)
\tkzDefPoint(41.02235,91.19850){A}
\tkzDefPoint(91.19850,41.02235){B}
\tkzDefPoint(0.0,0.0){C}
\tkzDrawArc[->,line width=0.9mm, blue](C,B)(A)
\node [black,circle,draw,fill=white,scale=0.75,line width=1mm] (20) at (-38.26834,-92.38795) {};
\node [black,circle,draw,fill=white,scale=0.75,line width=1mm] (46) at (92.38795,38.26834) {};
\node [black,circle,draw,fill=white,scale=0.75,line width=1mm] (47) at (-92.38795,-38.26834) {};
\node [black,circle,draw,fill=white,scale=0.75,line width=1mm] (48) at (-38.26834,92.38795) {};
\node [black,circle,draw,fill=white,scale=0.75,line width=1mm] (49) at (92.38795,-38.26834) {};
\node [black,circle,draw,fill=white,scale=0.75,line width=1mm] (50) at (38.26834,-92.38795) {};
\node [black,circle,draw,fill=white,scale=0.75,line width=1mm] (51) at (-92.38795,38.26834) {};
\node [black,circle,draw,fill=white,scale=0.75,line width=1mm] (52) at (38.26834,92.38795) {};
\end{tikzpicture}
          }
	\caption{A $\{4,7\}$-seed $G$ with the unique hamiltonian path $1,2,3 ,\dots ,18$ from $s=1$ to $t=18$. The set $V'$ that fulfills the requirements of
          the definition is the set containing all vertices of degree $3$, except for vertex $t=18$, so $V'=\{5,9,10,13,17\}$.  The uniqueness of the
          hamiltonian path as well as the fact that $G$ -- so also the result of $W_{V'}(G)$ used for splicing in a 3-connected graph -- is $3$-connected, have
          been checked by computer, but as the graph is relatively small, these properties can -- though tedious -- still be checked by hand.}
	\label{fig:7seed}
\end{figure}

\begin{figure}[tb]
	\centering
	\includegraphics[width=0.37\textwidth]{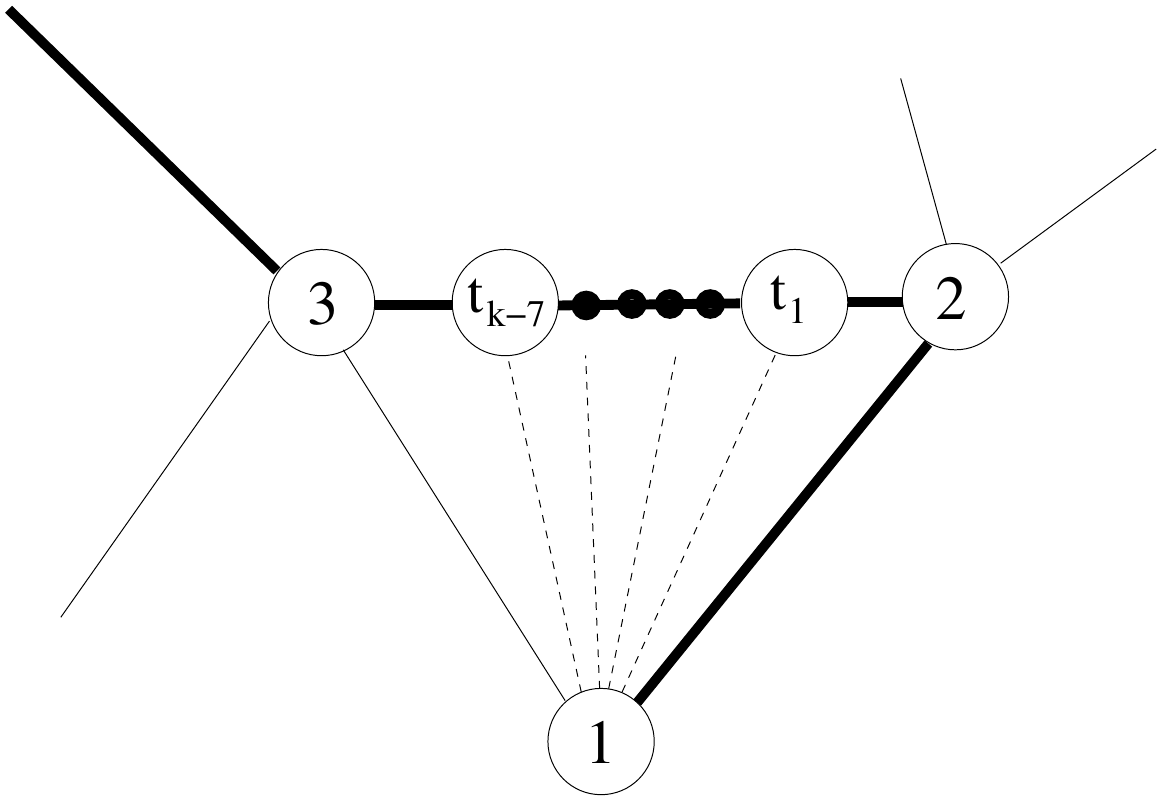}
	\caption{Constructing $\{4,k\}$-seeds for $k>8$.}
	\label{fig:largerk}
\end{figure}

\begin{remark}\label{rem:seeds}

  For each $k\ge 7$ there are $\{4,k\}$-seeds.

  For each $k\ge 8$ there are infinitely many $\{4,k\}$-seeds, so that the corresponding plugins after being used for splicing an edge
  with both endpoints of degree $3$ contain two vertices of degree $k$ and
   all other vertices have degree $4$.

\end{remark}

\begin{proof}
  We start from the $\{4,7\}$-seed $S_7$ in Figure~\ref{fig:7seed}.
  We use the triangle $1,2,3$ for constructing $\{4,k\}$-seeds for $k\ge 8$ as in Figure~\ref{fig:largerk}: new vertices $t_1,\dots ,t_{k-7}$ are
  inserted, the edge $\{2,3\}$ is replaced by the path $2,t_{1},\dots ,t_{k-7},3$, and edges $\{1,t_1\},\dots ,\{1,t_k\}$ are added. Each hamiltonian path from
  $s=1$ to $t=18$ that is not $1,2,t_{1},\dots ,t_{k-7},3,\dots ,18$ could be transformed to a hamiltonian path contradicting the uniqueness of the hamiltonian path in $S_7$.
  Also the connectivity requirements can be easily checked.

  For $k\ge 8$ there is a vertex $t_{k-7}$ and the number of vertices of degree $4$ can be increased by steps of $1$ always producing new $\{4,k\}$-seeds for
  the same $k$. This procedure is described in Figure~\ref{fig:extend_weak}. Any hamiltonian path from $1$ to $18$ traversing the vertices in a different way than given in
  Figure~\ref{fig:extend_weak} would imply a second hamiltonian path from $1$ to $18$ in $S_7$

  \end{proof}

The construction of the $\{4,k\}$-seeds is exclusively to show that such seeds do exist and by no means meant to construct minimal ones. For $k>7$ smaller
$\{4,k\}$-seeds are known -- e.g.\ a $\{4,10\}$-seed with 10 vertices. This $\{4,10\}$-seed has only vertices of degree $2$, $3$, and $4$ and the hamiltonian path
goes from a vertex of degree $2$ to a vertex of degree $3$ -- see Figure~\ref{fig:min10seed}.

\begin{figure}[tb]
	\centering
	\includegraphics[width=0.37\textwidth]{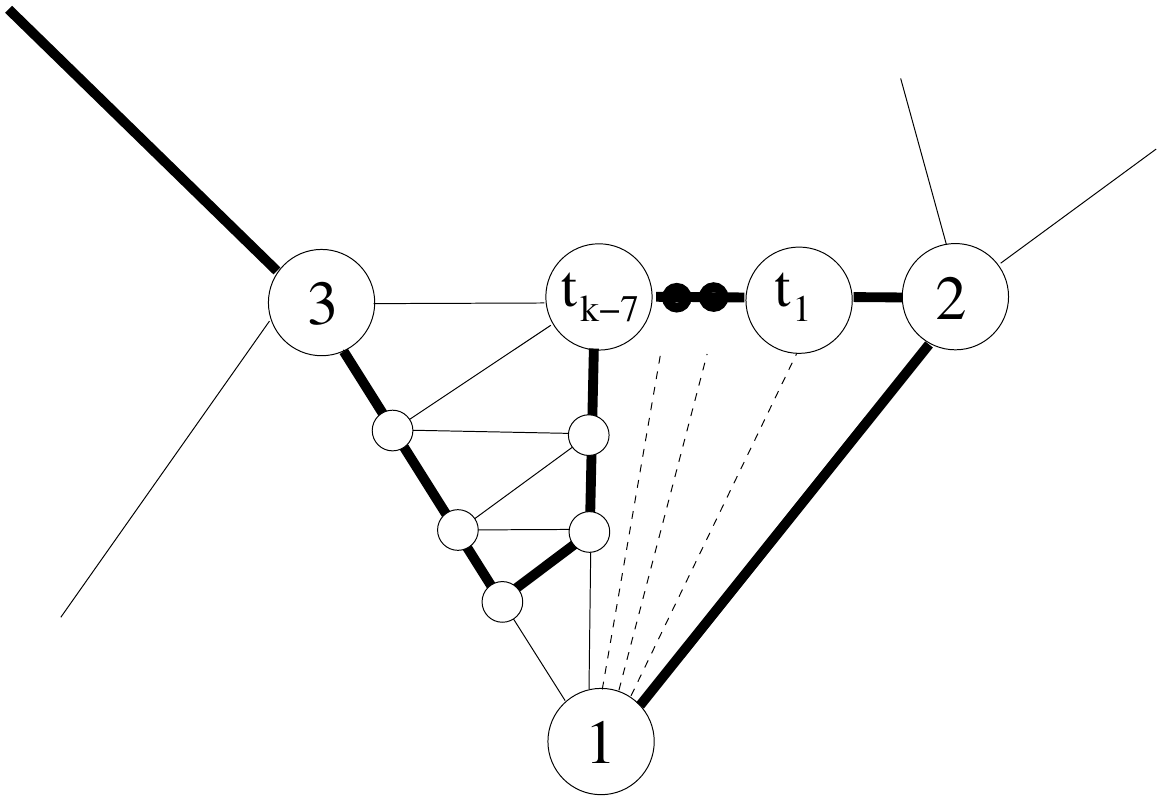}
	\caption{Extending a $\{4,k\}$-seed for $k\ge 8$ constructed from the $\{4,7\}$-seed in Figure~\ref{fig:7seed} by increasing the number of vertices of degree $4$. The number of vertices of degree $4$ can be increased by steps of $1$ vertex.}
	\label{fig:extend_weak}
\end{figure}

Unfortunately in spite of an extensive computer search, no $\{4\}$-, $\{4,5\}$-, or $\{4,6\}$-seeds were found. See Section~\ref{sec:comp} for details.

\begin{figure}[tb]
	\centering
        \resizebox{0.65\textwidth}{0.65\textwidth}
        {
          \begin{tikzpicture}[scale=0.065]
\def\vertexscale{1.00}
\def\labelscale{1.00}
\node [circle,red,draw,scale=\vertexscale] (1) at (-21.93655,-0.84880) {s=1};
\node [circle,black,draw,scale=\vertexscale] (2) at (-50.03016,30.63311) {2};
\node [circle,black,draw,scale=\vertexscale] (3) at (-33.70268,51.13316) {3};
\node [circle,black,draw,scale=\vertexscale] (4) at (67.33184,-16.33827) {4};
\node [circle,blue,draw,scale=\vertexscale] (5) at (21.24812,-69.89172) {5};
\node [circle,black,draw,scale=\vertexscale] (6) at (-69.16428,15.97351) {6};
\node [circle,black,draw,scale=\vertexscale] (7) at (-60.64509,-16.43889) {7};
\node [circle,black,draw,scale=\vertexscale] (8) at (64.54109,23.39152) {8};
\node [circle,blue,draw,scale=\vertexscale] (9) at (-19.15220,-69.94054) {9};
\node [circle,blue,draw,scale=\vertexscale] (10) at (31.56934,57.30163) {10};
\node [circle,black,draw,scale=\vertexscale] (11) at (15.46054,23.00597) {11};
\node [circle,black,draw,scale=\vertexscale] (12) at (35.73977,0.73803) {12};
\node [circle,blue,draw,scale=\vertexscale] (13) at (32.54155,-33.77063) {13};
\node [circle,black,draw,scale=\vertexscale] (14) at (2.47897,-44.96707) {14};
\node [circle,blue,draw,scale=\vertexscale] (15) at (-26.14997,-34.48309) {15};
\node [circle,orange,draw,scale=\vertexscale] (16) at (-7.21381,54.33470) {t=16};
\tkzDefPoint(79.33533,60.87614){17}
\tkzDefPoint(-38.26834,-92.38795){18}
\tkzDefPoint(60.87614,79.33533){19}
\tkzDefPoint(-13.05262,-99.14449){20}
\tkzDefPoint(13.05262,-99.14449){21}
\tkzDefPoint(-55.55702,83.14696){22}
\tkzDefPoint(-70.71068,70.71068){23}
\tkzDefPoint(-83.14696,55.55702){24}
\tkzDefPoint(-19.50903,98.07853){25}
\tkzDefPoint(-0.00000,100.00000){26}
\tkzDefPoint(19.50903,98.07853){27}
\tkzDefPoint(-99.14449,-13.05262){28}
\tkzDefPoint(-99.14449,13.05262){29}
\tkzDefPoint(-55.55702,-83.14696){30}
\tkzDefPoint(-70.71068,-70.71068){31}
\tkzDefPoint(-83.14696,-55.55702){32}
\tkzDefPoint(83.14696,-55.55702){33}
\tkzDefPoint(70.71068,-70.71068){34}
\tkzDefPoint(55.55702,-83.14696){35}
\tkzDefPoint(99.14449,13.05262){36}
\tkzDefPoint(99.14449,-13.05262){37}
\tkzDefPoint(92.38795,38.26834){38}
\tkzDefPoint(-92.38795,-38.26834){39}
\tkzDefPoint(-38.26834,92.38795){40}
\tkzDefPoint(92.38795,-38.26834){41}
\tkzDefPoint(38.26834,-92.38795){42}
\tkzDefPoint(-92.38795,38.26834){43}
\tkzDefPoint(38.26834,92.38795){44}
\draw [black] (1) to (7);
\draw [black] (1) to (2);
\draw [black] (1) to (3);
\draw [black] (1) to (16);
\draw [black] (1) to (11);
\draw [black] (1) to (15);
\draw [black] (2) to (3);
\draw [black] (2) to (6);
\draw [black] (2) to (23);
\node [draw=none,fill=none,scale=\labelscale] () at (-74.24621,74.24621) {13};
\draw [black] (3) to (22);
\node [draw=none,fill=none,scale=\labelscale] () at (-58.33487,87.30431) {4};
\draw [black] (3) to (16);
\draw [black] (4) to (37);
\node [draw=none,fill=none,scale=\labelscale] () at (104.10171,-13.70525) {5};
\draw [black] (4) to (33);
\node [draw=none,fill=none,scale=\labelscale] () at (87.30431,-58.33487) {3};
\draw [black] (4) to (12);
\draw [black] (4) to (8);
\draw [black] (5) to (21);
\node [draw=none,fill=none,scale=\labelscale] () at (13.70525,-104.10171) {4};
\draw [black] (5) to (14);
\draw [black] (5) to (35);
\node [draw=none,fill=none,scale=\labelscale] () at (58.33487,-87.30431) {6};
\draw [black] (6) to (24);
\node [draw=none,fill=none,scale=\labelscale] () at (-87.30431,58.33487) {5};
\draw [black] (6) to (7);
\draw [black] (6) to (29);
\node [draw=none,fill=none,scale=\labelscale] () at (-104.10171,13.70525) {10};
\draw [black] (7) to (28);
\node [draw=none,fill=none,scale=\labelscale] () at (-104.10171,-13.70525) {8};
\draw [black] (7) to (32);
\node [draw=none,fill=none,scale=\labelscale] () at (-87.30431,-58.33487) {16};
\draw [black] (8) to (12);
\draw [black] (8) to (17);
\node [draw=none,fill=none,scale=\labelscale] () at (83.30210,63.91995) {7};
\draw [black] (8) to (36);
\node [draw=none,fill=none,scale=\labelscale] () at (104.10171,13.70525) {9};
\draw [black] (9) to (30);
\node [draw=none,fill=none,scale=\labelscale] () at (-58.33487,-87.30431) {10};
\draw [black] (9) to (14);
\draw [black] (9) to (20);
\node [draw=none,fill=none,scale=\labelscale] () at (-13.70525,-104.10171) {8};
\draw [black] (10) to (11);
\draw [black] (10) to (27);
\node [draw=none,fill=none,scale=\labelscale] () at (20.48448,102.98245) {9};
\draw [black] (10) to (19);
\node [draw=none,fill=none,scale=\labelscale] () at (63.91995,83.30210) {6};
\draw [black] (11) to (16);
\draw [black] (11) to (12);
\draw [black] (12) to (13);
\draw [black] (13) to (34);
\node [draw=none,fill=none,scale=\labelscale] () at (74.24621,-74.24621) {2};
\draw [black] (13) to (14);
\draw [black] (14) to (15);
\draw [black] (15) to (31);
\node [draw=none,fill=none,scale=\labelscale] () at (-74.24621,-74.24621) {16};
\draw [black] (16) to (26);
\node [draw=none,fill=none,scale=\labelscale] () at (-0.00000,105.00000) {15};
\draw [black] (16) to (25);
\node [draw=none,fill=none,scale=\labelscale] () at (-20.48448,102.98245) {7};
\tkzDefPoint(93.49426,35.47990){A}
\tkzDefPoint(93.49426,-35.47990){B}
\tkzDefPoint(0.0,0.0){C}
\tkzDrawArc[<-,line width=0.9mm, red](C,B)(A)
\tkzDefPoint(91.19850,-41.02235){A}
\tkzDefPoint(41.02235,-91.19850){B}
\tkzDefPoint(0.0,0.0){C}
\tkzDrawArc[<-,line width=0.9mm, green](C,B)(A)
\tkzDefPoint(35.47990,-93.49426){A}
\tkzDefPoint(-35.47990,-93.49426){B}
\tkzDefPoint(0.0,0.0){C}
\tkzDrawArc[->,line width=0.9mm, red](C,B)(A)
\tkzDefPoint(-41.02235,-91.19850){A}
\tkzDefPoint(-91.19850,-41.02235){B}
\tkzDefPoint(0.0,0.0){C}
\tkzDrawArc[<-,line width=0.9mm, orange](C,B)(A)
\tkzDefPoint(-93.49426,-35.47990){A}
\tkzDefPoint(-93.49426,35.47990){B}
\tkzDefPoint(0.0,0.0){C}
\tkzDrawArc[<-,line width=0.9mm, blue](C,B)(A)
\tkzDefPoint(-91.19850,41.02235){A}
\tkzDefPoint(-41.02235,91.19850){B}
\tkzDefPoint(0.0,0.0){C}
\tkzDrawArc[->,line width=0.9mm, green](C,B)(A)
\tkzDefPoint(-35.47990,93.49426){A}
\tkzDefPoint(35.47990,93.49426){B}
\tkzDefPoint(0.0,0.0){C}
\tkzDrawArc[->,line width=0.9mm, orange](C,B)(A)
\tkzDefPoint(41.02235,91.19850){A}
\tkzDefPoint(91.19850,41.02235){B}
\tkzDefPoint(0.0,0.0){C}
\tkzDrawArc[->,line width=0.9mm, blue](C,B)(A)
\node [black,circle,draw,fill=white,scale=0.75,line width=1mm] (18) at (-38.26834,-92.38795) {};
\node [black,circle,draw,fill=white,scale=0.75,line width=1mm] (38) at (92.38795,38.26834) {};
\node [black,circle,draw,fill=white,scale=0.75,line width=1mm] (39) at (-92.38795,-38.26834) {};
\node [black,circle,draw,fill=white,scale=0.75,line width=1mm] (40) at (-38.26834,92.38795) {};
\node [black,circle,draw,fill=white,scale=0.75,line width=1mm] (41) at (92.38795,-38.26834) {};
\node [black,circle,draw,fill=white,scale=0.75,line width=1mm] (42) at (38.26834,-92.38795) {};
\node [black,circle,draw,fill=white,scale=0.75,line width=1mm] (43) at (-92.38795,38.26834) {};
\node [black,circle,draw,fill=white,scale=0.75,line width=1mm] (44) at (38.26834,92.38795) {};
\end{tikzpicture}
          }
	\caption{A $\{4,6,7\}$-seed $G$ with the unique hamiltonian path $1,2,3 ,\dots ,16$ from $s=1$ to $t=16$. The set $V'$ that fulfills the requirements of the definition
          is the set containing all vertices of degree $3$.}
	\label{fig:67seed}
\end{figure}

\begin{remark}\label{rem:seeds46k}

  For each $k\ge 7$ there are $\{4,6,k\}$-seeds.

  For each $k\ge 8$ there are infinitely many $\{4,6,k\}$-seeds, so that the corresponding plugins after being used for splicing  an edge with both endpoints of degree $3$ contain one vertex of degree $6$, $2$ vertices of degree $k$, and
  all other vertices have degree $4$.

\end{remark}

\begin{proof}

  In Figure~\ref{fig:67seed} a $\{4,6,7\}$-seed is given that contains a triangle $1,2,3$ and the unique hamiltonian cycle from $1$ to $16$ contains the edges $\{1,2\}$ and $\{2,3\}$.
  Except for vertex $1$ none of the vertices has degree $7$ after splicing an edge, so the seed can be extended in the same way as in the proof of Remark~\ref{rem:seeds} to seeds for larger $k$ and for $k\ge 8$ also to the infinite sequence.

  \end{proof}

\begin{figure}[tb]
	\centering
        \resizebox{0.65\textwidth}{0.65\textwidth}
        {
          \begin{tikzpicture}[scale=0.06]
\def\vertexscale{1.20}
\def\labelscale{1.20}
\node [circle,orange,draw,scale=\vertexscale] (1) at (11.72745,-56.28262) {s=1};
\node [circle,orange,draw,scale=\vertexscale] (2) at (-12.26206,-22.20158) {2};
\node [circle,black,draw,scale=\vertexscale] (3) at (27.29077,-15.95033) {3};
\node [circle,black,draw,scale=\vertexscale] (4) at (40.95585,8.76305) {4};
\node [circle,blue,draw,scale=\vertexscale] (5) at (-65.35816,-7.23652) {5};
\node [circle,black,draw,scale=\vertexscale] (6) at (-49.83135,-33.45978) {6};
\node [circle,black,draw,scale=\vertexscale] (7) at (31.94288,66.68313) {7};
\node [circle,black,draw,scale=\vertexscale] (8) at (31.95259,40.54985) {8};
\node [circle,blue,draw,scale=\vertexscale] (9) at (64.45982,36.51846) {9};
\node [circle,black,draw,scale=\vertexscale] (10) at (-27.71357,-59.32759) {10};
\node [circle,black,draw,scale=\vertexscale] (11) at (-20.21467,63.89366) {11};
\node [circle,black,draw,scale=\vertexscale] (12) at (-2.30417,34.61914) {12};
\node [circle,blue,draw,scale=\vertexscale] (13) at (-23.19048,14.91315) {13};
\node [circle,black,draw,scale=\vertexscale] (14) at (-55.47366,24.88342) {14};
\node [circle,blue,draw,scale=\vertexscale] (15) at (41.39129,-56.35305) {15};
\node [circle,black,draw,scale=\vertexscale] (16) at (59.13398,-30.61516) {t=16};
\tkzDefPoint(-60.87614,79.33533){17}
\tkzDefPoint(-38.26834,-92.38795){18}
\tkzDefPoint(-79.33533,60.87614){19}
\tkzDefPoint(-19.50903,-98.07853){20}
\tkzDefPoint(0.00000,-100.00000){21}
\tkzDefPoint(19.50903,-98.07853){22}
\tkzDefPoint(-83.14696,-55.55702){23}
\tkzDefPoint(-70.71068,-70.71068){24}
\tkzDefPoint(-55.55702,-83.14696){25}
\tkzDefPoint(-98.07853,19.50903){26}
\tkzDefPoint(-100.00000,0.00000){27}
\tkzDefPoint(-98.07853,-19.50903){28}
\tkzDefPoint(79.33533,-60.87614){29}
\tkzDefPoint(60.87614,-79.33533){30}
\tkzDefPoint(98.07853,19.50903){31}
\tkzDefPoint(100.00000,0.00000){32}
\tkzDefPoint(98.07853,-19.50903){33}
\tkzDefPoint(55.55702,83.14696){34}
\tkzDefPoint(70.71068,70.71068){35}
\tkzDefPoint(83.14696,55.55702){36}
\tkzDefPoint(-19.50903,98.07853){37}
\tkzDefPoint(-0.00000,100.00000){38}
\tkzDefPoint(19.50903,98.07853){39}
\tkzDefPoint(-38.26834,92.38795){40}
\tkzDefPoint(92.38795,-38.26834){41}
\tkzDefPoint(-92.38795,-38.26834){42}
\tkzDefPoint(38.26834,92.38795){43}
\tkzDefPoint(38.26834,-92.38795){44}
\tkzDefPoint(-92.38795,38.26834){45}
\tkzDefPoint(92.38795,38.26834){46}
\draw [black] (1) to (15);
\draw [black] (1) to (22);
\node [draw=none,fill=none,scale=\labelscale] () at (20.48448,-102.98245) {7};
\draw [black] (1) to (21);
\node [draw=none,fill=none,scale=\labelscale] () at (0.00000,-105.00000) {11};
\draw [black] (1) to (2);
\draw [black] (1) to (3);
\draw [black] (2) to (3);
\draw [black] (2) to (10);
\draw [black] (2) to (6);
\draw [black] (2) to (13);
\draw [black] (3) to (4);
\draw [black] (3) to (16);
\draw [black] (4) to (32);
\node [draw=none,fill=none,scale=\labelscale] () at (105.00000,0.00000) {5};
\draw [black] (4) to (12);
\draw [black] (4) to (8);
\draw [black] (5) to (27);
\node [draw=none,fill=none,scale=\labelscale] () at (-105.00000,0.00000) {4};
\draw [black] (5) to (14);
\draw [black] (5) to (6);
\draw [black] (6) to (10);
\draw [black] (6) to (24);
\node [draw=none,fill=none,scale=\labelscale] () at (-74.24621,-74.24621) {7};
\draw [black] (7) to (39);
\node [draw=none,fill=none,scale=\labelscale] () at (20.48448,102.98245) {1};
\draw [black] (7) to (34);
\node [draw=none,fill=none,scale=\labelscale] () at (58.33487,87.30431) {16};
\draw [black] (7) to (35);
\node [draw=none,fill=none,scale=\labelscale] () at (74.24621,74.24621) {6};
\draw [black] (7) to (8);
\draw [black] (8) to (9);
\draw [black] (8) to (12);
\draw [black] (9) to (36);
\node [draw=none,fill=none,scale=\labelscale] () at (87.30431,58.33487) {10};
\draw [black] (9) to (31);
\node [draw=none,fill=none,scale=\labelscale] () at (102.98245,20.48448) {14};
\draw [black] (10) to (20);
\node [draw=none,fill=none,scale=\labelscale] () at (-20.48448,-102.98245) {11};
\draw [black] (10) to (25);
\node [draw=none,fill=none,scale=\labelscale] () at (-58.33487,-87.30431) {9};
\draw [black] (11) to (17);
\node [draw=none,fill=none,scale=\labelscale] () at (-63.91995,83.30210) {16};
\draw [black] (11) to (37);
\node [draw=none,fill=none,scale=\labelscale] () at (-20.48448,102.98245) {10};
\draw [black] (11) to (38);
\node [draw=none,fill=none,scale=\labelscale] () at (-0.00000,105.00000) {1};
\draw [black] (11) to (12);
\draw [black] (12) to (13);
\draw [black] (13) to (14);
\draw [black] (14) to (26);
\node [draw=none,fill=none,scale=\labelscale] () at (-102.98245,20.48448) {9};
\draw [black] (14) to (19);
\node [draw=none,fill=none,scale=\labelscale] () at (-83.30210,63.91995) {15};
\draw [black] (15) to (30);
\node [draw=none,fill=none,scale=\labelscale] () at (63.91995,-83.30210) {14};
\draw [black] (15) to (16);
\draw [black] (16) to (33);
\node [draw=none,fill=none,scale=\labelscale] () at (102.98245,-20.48448) {7};
\draw [black] (16) to (29);
\node [draw=none,fill=none,scale=\labelscale] () at (83.30210,-63.91995) {11};
\tkzDefPoint(-83.14696,-55.55702){A}
\tkzDefPoint(-85.17598,-35.28105){B}
\tkzDefPoint(-98.07853,-19.50903){C}
\tkzCircumCenter(A,B,C)\tkzGetPoint{D}
\tkzDrawArc[black](D,A)(C)
\tkzDefPoint(-35.47990,93.49426){A}
\tkzDefPoint(35.47990,93.49426){B}
\tkzDefPoint(0.0,0.0){C}
\tkzDrawArc[<-,line width=0.9mm, red](C,B)(A)
\tkzDefPoint(41.02235,91.19850){A}
\tkzDefPoint(91.19850,41.02235){B}
\tkzDefPoint(0.0,0.0){C}
\tkzDrawArc[<-,line width=0.9mm, green](C,B)(A)
\tkzDefPoint(93.49426,35.47990){A}
\tkzDefPoint(93.49426,-35.47990){B}
\tkzDefPoint(0.0,0.0){C}
\tkzDrawArc[<-,line width=0.9mm, orange](C,B)(A)
\tkzDefPoint(91.19850,-41.02235){A}
\tkzDefPoint(41.02235,-91.19850){B}
\tkzDefPoint(0.0,0.0){C}
\tkzDrawArc[<-,line width=0.9mm, blue](C,B)(A)
\tkzDefPoint(35.47990,-93.49426){A}
\tkzDefPoint(-35.47990,-93.49426){B}
\tkzDefPoint(0.0,0.0){C}
\tkzDrawArc[->,line width=0.9mm, red](C,B)(A)
\tkzDefPoint(-41.02235,-91.19850){A}
\tkzDefPoint(-91.19850,-41.02235){B}
\tkzDefPoint(0.0,0.0){C}
\tkzDrawArc[->,line width=0.9mm, green](C,B)(A)
\tkzDefPoint(-93.49426,-35.47990){A}
\tkzDefPoint(-93.49426,35.47990){B}
\tkzDefPoint(0.0,0.0){C}
\tkzDrawArc[->,line width=0.9mm, orange](C,B)(A)
\tkzDefPoint(-91.19850,41.02235){A}
\tkzDefPoint(-41.02235,91.19850){B}
\tkzDefPoint(0.0,0.0){C}
\tkzDrawArc[->,line width=0.9mm, blue](C,B)(A)
\node [black,circle,draw,fill=white,scale=0.75,line width=1mm] (18) at (-38.26834,-92.38795) {};
\node [black,circle,draw,fill=white,scale=0.75,line width=1mm] (40) at (-38.26834,92.38795) {};
\node [black,circle,draw,fill=white,scale=0.75,line width=1mm] (41) at (92.38795,-38.26834) {};
\node [black,circle,draw,fill=white,scale=0.75,line width=1mm] (42) at (-92.38795,-38.26834) {};
\node [black,circle,draw,fill=white,scale=0.75,line width=1mm] (43) at (38.26834,92.38795) {};
\node [black,circle,draw,fill=white,scale=0.75,line width=1mm] (44) at (38.26834,-92.38795) {};
\node [black,circle,draw,fill=white,scale=0.75,line width=1mm] (45) at (-92.38795,38.26834) {};
\node [black,circle,draw,fill=white,scale=0.75,line width=1mm] (46) at (92.38795,38.26834) {};
\end{tikzpicture}
          }
	\caption{A $\{4,5,6\}$-seed $G$ with the unique hamiltonian path $1,2,3 ,\dots ,16$ from $s=1$ to $t=16$. The set $V'$ that fulfills the requirements of the definition
          is the set containing all vertices of degree $3$.}
	\label{fig:56seed}
\end{figure}

\begin{remark}\label{rem:seeds45k}

  For each $k\ge 6$ there are $\{4,5,k\}$-seeds.

  For each $k\ge 7$ there are infinitely many $\{4,5,k\}$-seeds, so that the corresponding plugins after being used for splicing an edge with both endpoints of degree $3$ contain one vertex of degree $5$, $2$ vertices of degree $k$, and
  all other vertices have degree $4$.

\end{remark}

\begin{proof}

  In Figure~\ref{fig:56seed} a $\{4,5,6\}$-seed is given that contains a triangle $1,2,3$ and the unique hamiltonian cycle from $1$ to $16$ contains the edges $\{1,2\}$ and $\{2,3\}$.
  Except for vertex $1$ none of the vertices has degree $6$ after splicing an edge, so the seed can be extended in the same way as in the proof of Remark~\ref{rem:seeds} to seeds for larger $k$ and for $k\ge 7$ also to the infinite sequence.

\end{proof}

Theorem~\ref{thm:set} and Remarks~\ref{rem:seeds}, \ref{rem:seeds46k}, and \ref{rem:seeds45k} now immediately imply the main result for minimum degree $4$:

\begin{theorem}\label{thm:main2}

  \begin{itemize}
\item Except for maybe $\{4\}, \{4,5\}$, and $\{4,6\}$, any set $M=\{4,d_1,d_2,\dots ,d_k\}$ with $4\le d_1 < d_2 < \dots < d_k$ is uhc-realizable.

\item Any set $M=\{4,d_1,d_2,\dots ,d_k\}$ with $8\le d_1 < d_2 < \dots < d_k$ and $k\ge 1$ is strongly uhc-realizable.
  \end{itemize}
  
\end{theorem}

\bigskip

Due to
Theorem~\ref{thm:set} the existence of a $4$-seed implies the existence of a $3$-connected uniquely hamiltonian $4$-regular graph, but in fact also the other
direction is correct:

\begin{figure}[tb]
	\centering
	\includegraphics[width=0.6\textwidth]{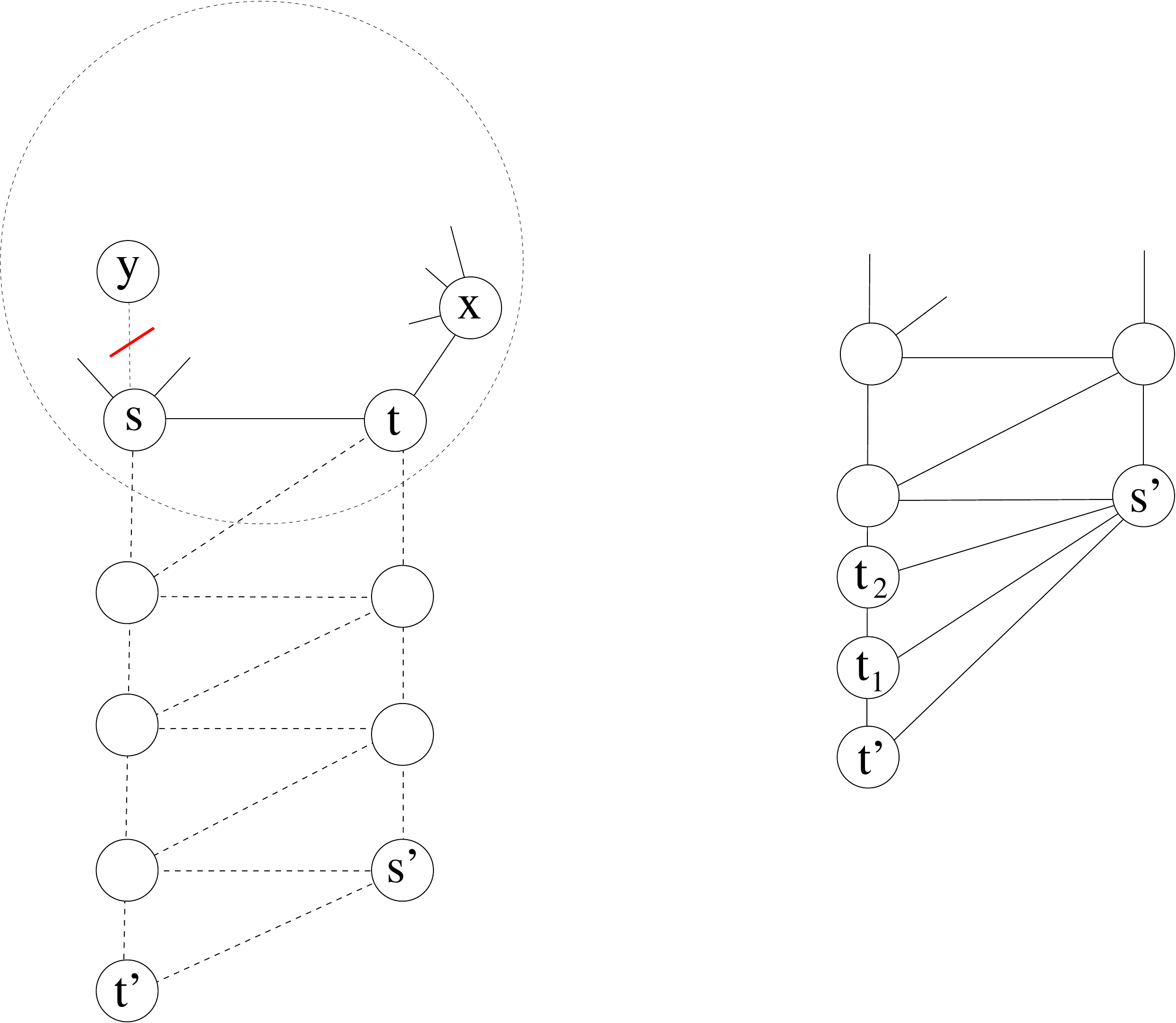}
	\caption{Extending a $\{4\}$-seed with more vertices of degree $4$ and making  $\{4,k\}$-seeds from it, depicted for the example $k=6$, where the set $V'$ from the definition of seeds would be
          $\{y,t',t_1,t_2\}$.}
	\label{fig:extend_4}
\end{figure}

\begin{corollary}\label{cor:notsheehan}
  There is a $3$-connected uniquely hamiltonian $4$-regular graph, if and only if there is a $\{4\}$-seed. In that case there are infinitely many $3$-connected uniquely hamiltonian
  $4$-regular graphs and every set $M$ of natural numbers $d\ge 2$ with $4\in M$ and $|M|\ge 2$ is strongly uhc-realizable.
\end{corollary}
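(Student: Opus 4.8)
The plan is to treat the biconditional as two implications, only one of which requires real work, and then to obtain the two remaining assertions from results already in the paper. The implication from the existence of a $4$-seed to the existence of a $3$-connected uniquely hamiltonian $4$-regular graph is immediate: Theorem~\ref{thm:set} with $k=0$ says that the existence of a $4$-seed makes $M=\{4\}$ uhc-realizable, i.e.\ that there is a $3$-connected uniquely hamiltonian graph all of whose vertices have degree $4$. So the substance is the converse. Given a $3$-connected uniquely hamiltonian $4$-regular graph $G$ with unique hamiltonian cycle $C$, I would build a $4$-seed as follows. Each vertex of $G$ has exactly two neighbours on $C$ and two off $C$. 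Fix a vertex $t$, pick a $C$-neighbour $a$ and an off-$C$ neighbour $c$ of $t$ (these exist and $a\neq c$), put $s:=a$ and $G':=G-\{t,a\}-\{t,c\}$. In $G'$ the vertex $t$ has degree $2$ and is the only vertex of that degree, $s$ and $c$ have degree $3$, and every other vertex keeps degree $4$; this is exactly the degree pattern of a $4$-seed, with $s$ among the two cubic vertices. Moreover $C-\{t,a\}$ is a hamiltonian path from $s$ to $t$ in $G'$ (it avoids the chord $\{t,c\}$), and it is the only one, since any hamiltonian $s$--$t$ path in $G'$ together with $\{t,a\}$ would be a hamiltonian cycle of $G$, hence equal to $C$.

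It remains to verify the connectivity requirement in the seed definition, namely that $H:=W(G')$ together with a new vertex $v'$ joined to $v$, $s$ and $t$ is $3$-connected; this is the only step that is not routine. The key observation is that $H$ is precisely $G$ with the two deleted edges re-inserted as subdivided edges -- $v'$ subdividing $\{t,a\}$ and $v$ subdividing $\{t,c\}$ -- plus the extra edge $\{v,v'\}$; equivalently, $\{t,v,v'\}$ is a triangle in $H$ whose only edges to the rest of $H$ run to $a$, $c$ and the two untouched neighbours of $t$, and contracting this triangle returns $G$. Now suppose $\{p,q\}$ is a $2$-cut of $H$, and distinguish cases by how many of $p,q$ lie in $\{t,v,v'\}$. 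If none do, the triangle lies inside a single component of $H-p-q$, so contracting it shows $G-p-q$ is disconnected, contradicting the $3$-connectivity of $G$. If exactly one does, a direct check shows that $H$ minus that vertex is either $G-t$ with the path $(c,v,v',a)$ attached, or $G$ with one edge deleted and a second edge subdivided -- in each case a $2$-connected graph, so removing a further vertex $q$ cannot disconnect it. If both lie in the triangle, $H-p-q$ is either $G'$ itself (connected, since $G$ is $3$-edge-connected) or $G-t$ with a single pendant vertex, again connected. Hence $H$ has no $2$-cut, and the same, easier, bookkeeping rules out cut vertices, so $H$ is $3$-connected and $G'$ is a $4$-seed. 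I expect this case analysis to be the main obstacle, although each individual case becomes routine once the \emph{triangle-expansion} picture is in place.

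For the two remaining claims, a $4$-seed is now available. To get infinitely many $3$-connected uniquely hamiltonian $4$-regular graphs I would use the construction in the proof of Theorem~\ref{thm:set} for $M=\{4\}$: it starts from a graph $G_{k'}$ of Lemma~\ref{lem:gk}, which exists for every $k'$ and has unboundedly many vertices, and splices its matching edges with copies of a weak H-plugin $\mathcal{P}_4$, producing a $3$-connected uniquely hamiltonian $4$-regular graph; letting $k'\to\infty$ yields infinitely many of them. Finally, let $M$ be a set of natural numbers $\geq 2$ with $4\in M$ and $|M|\geq 2$, so that $\min M\in\{2,3,4\}$. If $\min M=2$, Remark~\ref{rem:2} (with $k\geq 1$) gives strong uhc-realizability; if $\min M=3$, then $M$ contains the even number $4$, so Theorem~\ref{thm:alldegcubic} does; and if $\min M=4$, Theorem~\ref{thm:set} applies because a $4$-seed exists (and $k\geq 1$). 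In every case $M$ is strongly uhc-realizable.
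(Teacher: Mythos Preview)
Your proof is correct and follows the same overall architecture as the paper: the implication $4$-seed $\Rightarrow$ $3$-connected uniquely hamiltonian $4$-regular graph is obtained from Theorem~\ref{thm:set} with $k=0$, and the two final assertions are obtained exactly as in the paper by citing Remark~\ref{rem:2}, Theorem~\ref{thm:alldegcubic}, and Theorem~\ref{thm:set}.

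The only genuine difference is in the construction for the converse. The paper builds a $4$-seed by choosing a vertex $s$ of the $4$-regular graph $G$, \emph{subdividing} a hamiltonian edge at $s$ with a new vertex $t$, and deleting one chord at $s$; you instead pick a vertex $t$, delete one hamiltonian edge $\{t,a\}$ and one chord $\{t,c\}$, and set $s=a$. Both yield the required degree pattern (one vertex of degree~$2$, two of degree~$3$ including $s$, rest degree~$4$) and the unique $s$--$t$ hamiltonian path for the same reason. What your variant buys is a clean description of the auxiliary graph $H=W(G')+v'$: it is $G$ with two edges at $t$ subdivided and the two subdivision vertices joined, i.e.\ a triangle on $\{t,v,v'\}$ whose contraction returns $G$. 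This makes the $3$-connectivity check --- which the paper simply asserts without argument --- a short and transparent case analysis. Your observation that the absence of $2$-cuts already rules out cutvertices (since a cutvertex together with any vertex from a non-singleton component would be a $2$-cut) is correct, so the separate bookkeeping you mention is in fact unnecessary.
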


\begin{proof}
  From a $3$-connected uniquely hamiltonian $4$-regular graph $G$ we can get a $\{4\}$-seed with $\deg(s)=3$ and  $\deg(t)=2$ 
  by choosing a vertex of $G$ as $s$, subdivide an edge $\{s,x\}$ on the hamiltonian cycle incident with $s$ with a new vertex $t$, and remove an edge $\{s,y\}$ that is not on the hamiltonian cycle.
  The set $\{y,t\}$ shows that it is a $\{4\}$-seed. The $3$-connectivity after using plugins constructed from it follows by standard arguments from Menger's theorem. A way to construct
  seeds with more vertices of degree $4$ and $\{4,k\}$-seeds for $k>4$ is given in Figure~\ref{fig:extend_4}.
  The rest of the statement is a direct consequence of Remark~\ref{rem:2}, Theorem~\ref{thm:alldegcubic}, and Theorem~\ref{thm:set}. 
\end{proof}

Furthermore, for $4$-regular graphs, the existence of a $2$-connected uniquely hamiltonian graph also implies the existence of a $3$-connected uniquely hamiltonian graph:

\begin{lemma}

  There is a $3$-connected uniquely hamiltonian $4$-regular graph, if and only if there is a $2$-connected uniquely hamiltonian $4$-regular graph.

\end{lemma}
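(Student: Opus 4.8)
The plan is to establish the non-trivial implication --- that the existence of a $2$-connected uniquely hamiltonian $4$-regular graph forces the existence of a $3$-connected one --- by repeatedly removing $2$-separators; the converse is immediate. By Corollary~\ref{cor:notsheehan} it suffices to exhibit a single $3$-connected uniquely hamiltonian $4$-regular graph (equivalently a $4$-seed), so I would take an arbitrary $2$-connected uniquely hamiltonian $4$-regular graph $G$ with unique hamiltonian cycle $C$ and induct on a complexity measure of $G$ --- for instance the number of minimal $2$-separators, or $\sum$ over all $2$-separators of the size of the smaller side. If the measure is $0$ then $G$ is already $3$-connected; otherwise I want to build from $G$ another $2$-connected uniquely hamiltonian $4$-regular graph $G'$ with strictly smaller measure.

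Before the surgery I would nail down the structure at a $2$-separator $\{u,w\}$. Since $G$ is hamiltonian, $G-\{u,w\}$ has exactly two components $A$ and $B$; the cycle $C$ decomposes as $u\,P_A\,w\,P_B\,u$, where $P_A$ is a hamiltonian path of $G[A\cup\{u,w\}]$ from $u$ to $w$ and $P_B$ one of $G[B\cup\{u,w\}]$; a possible chord $uw$ is not on $C$; each of $u,w$ has a neighbour in $A$ and one in $B$; and --- the point we will lean on --- $G[A\cup\{u,w\}]$ has a \emph{unique} hamiltonian $u$--$w$ path (a second one would combine with $P_B$ into a second hamiltonian cycle of $G$), and symmetrically for the $B$-side. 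Simplicity and $4$-regularity force $|A|,|B|\ge 3$, so $P_A$ and $P_B$ each contain an edge with both endpoints inside $A$, resp.\ inside $B$; moreover a degree count shows $G[A]$ and $G[B]$ are not trees, so such an edge can be chosen to be a non-bridge of $G[A]$ resp.\ $G[B]$.

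For the surgery I would use a degree-preserving local reconnection across the cut: choose such edges $e_A=\{a,a'\}$ of $P_A$ and $e_B=\{b,b'\}$ of $P_B$, delete both, and add the crossing edges $\{a,b\}$ and $\{a',b'\}$, labelled so that the two arcs of $C\setminus\{e_A,e_B\}$ fuse into one hamiltonian cycle $C'$ of the new graph $G'$. Then $G'$ is again $4$-regular (no vertex changes degree), $\{u,w\}$ is no longer a $2$-separator (the new edges cross it, the sides stay internally connected because $e_A,e_B$ are non-bridges), and one checks no new small separator is created near the cut.

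The crux, where I expect essentially all of the work to sit, is to show $G'$ is still uniquely hamiltonian. A hamiltonian cycle of $G'$ uses $0$, $1$, or $2$ of the new edges; undoing the swap in each case yields either a hamiltonian cycle of $G$ (necessarily $C$), or a hamiltonian $u$--$w$ path of $G[A\cup\{u,w\}]$ or of $G[B\cup\{u,w\}]$ (necessarily $P_A$ resp.\ $P_B$), and tracing these back forces the cycle to be $C'$ --- but only if $e_A,e_B$ are taken close enough to $\{u,w\}$ on $P_A,P_B$ that the dangerous case (a hamiltonian cycle of $G'$ using exactly one new edge, which would arise from a hamiltonian $a$--$b$ path of $G$ avoiding $e_A,e_B$) is excluded by the uniqueness of $C$ and of the two side-paths. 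So the real technical tasks are: prove that a choice of $e_A,e_B$ simultaneously meeting all these constraints always exists, and carry out the case analysis. Granting that, the complexity measure strictly decreases, finitely many iterations yield a $3$-connected uniquely hamiltonian $4$-regular graph, and with Corollary~\ref{cor:notsheehan} this re-derives the existence of a $4$-seed as well.
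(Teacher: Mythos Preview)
Your approach is genuinely different from the paper's, and the gap you flag yourself --- unique hamiltonicity of $G'$ --- is real and not obviously closable. The dangerous case is a hamiltonian cycle of $G'$ using exactly one new edge, say $\{a,b\}$: this corresponds to a hamiltonian $a$--$b$ path in $G-e_A-e_B$, and such a path can weave through $u$ and $w$ in several patterns (with $u$ or $w$ serving as a crossing, or sitting entirely on one side). Ruling all of these out does not follow from the uniqueness of $P_A$ and $P_B$ alone, and ``choose $e_A,e_B$ close enough to $\{u,w\}$'' is not a criterion one can actually verify does the job. Two further soft spots: your complexity measure is never pinned down, and the claim that the swap creates no new $2$-separator is asserted, not argued --- an edge swap of this kind can in principle create cuts at pairs like $\{a,b'\}$, so the induction is not yet well-founded.

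The paper sidesteps all of this by \emph{extracting} a plugin from $G$ rather than repairing $G$. Take a $2$-cut $\{s,t\}$ whose smaller side $C_0$ has minimum size and set $G_0=G[C_0\cup\{s,t\}]$. Minimality and parity force (after possibly adding the edge $\{s,t\}$) that $s,t$ have degree $3$ in $G_0$, all other vertices degree $4$, and there is a unique hamiltonian $s$--$t$ path $P_H$. Now subdivide any edge $e\neq\{s,t\}$ off $P_H$ by a new vertex $v$: the resulting $G_0^v$ has no hamiltonian $s$--$t$ path (it would be forced through $e$, contradicting uniqueness of $P_H$), so $(G_0^v,s,t,v)$ is a \emph{strong} H-plugin with exactly the right degrees. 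Splicing all matching-edges of a $G_k$ from Lemma~\ref{lem:gk} with it yields a $4$-regular uniquely hamiltonian graph, with uniqueness supplied for free by Lemma~\ref{lem:op}(iii); the minimality of $C_0$ is then exactly what is used to verify $3$-connectedness. No induction, no delicate swap, and no case analysis on how a hypothetical new cycle might thread the cut.
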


\begin{proof}

  As $3$-connected graphs are also $2$-connected, the only thing to prove is that the existence of a uniquely hamiltonian $4$-regular graph with a $2$-cut implies the existence
  of a $3$-connected uniquely hamiltonian $4$-regular graph.

  Let $G=(V,E)$ be a uniquely hamiltonian $4$-regular graph with a $2$-cut and $\{s,t\}$ be vertices of a $2$-cut, so that one of the components of $G[V\setminus \{s,t\}]$ -- say $C_0$ -- has minimum size.
  Let $G_0=G[C_0\cup\{s,t\}]$. Then there is a unique hamiltonian path in $G_0$ from $s$ to $t$ and due to the minimality of $C_0$ the vertices $s$ and $t$ have degree at least $2$ in $G_0$. If
  one has degree $2$, they are non-adjacent. As the number of vertices with odd degree must be even and as they both have neighbours in more than one component, they both have degree
  $2$ or both have degree $3$. In case of degree $2$ we can add the edge $\{s,t\}$, so that in each case we have a graph, which we will call again $G_0$ with a unique hamiltonian path $P_H$ from $s$ to $t$,
  where $s$ and $t$ are of degree $3$ and all other vertices of degree $4$. Let now $G_0^v$ be $G_0$ with an edge $e\not=\{s,t\}$ that is not part of $P_H$ subdivided with a new vertex $v$. By construction $G_0^v$
  does not have a hamiltonian path from $s$ to $t$, but a unique hamiltonian path in $(G_0^v)_{-v}=G_0$. So $G_0^v$ is a strong H-plugin that when applied to two connected copies of $P^-$ like in Lemma~\ref{lem:gk}
  gives a $4$-regular uniquely hamiltonian graph.

  It remains to be shown that for a $3$-connected graph $G'$ and suitable $x,y\in G'$ the graph $O(x,y,G_0^v)$ is $3$-connected. It is sufficient to show that
  the graph $G_1$ obtained from $G_0^v$ by adding a new vertex $v'$ and connecting it to $s,t,$ and $v$ is $3$-connected.

  Assume to the contrary that $G_1$ has a $2$-cut $K$. Note that $K\not= \{s,t\}$ as $C_0$ is a component and $v$ and through $v$ also $v'$ are connected to
  it. If $s$ and $t$ are in different components of $G_1\setminus K$, then the common neighbour $v'$ must be in $K$.  So $K\setminus \{v'\}$ is a $1$-cut of
  $G_0^v$. If $K=\{v,v'\}$, choose $w$ as a neighbour of $v$ different from $s,t$, otherwise let $w$ be the vertex in $K\setminus \{v'\}$. Then $w$ is a
  cutvertex of $G_0^v$ and also of $G_0$. Together with $s$ or $t$ it forms a $2$-cut contradicting the minimality of $C_0$.

  If $s$ and $t$ are in the same component of $G_1\setminus K$ or one is in $K$, there is a vertex $x\not\in \{v,v'\}$ in a component not containing $s$ or $t$. But then $K$ -- possibly after
  replacing $v$ or $v'$ in $K$ by a neighbour -- again contradicts the minimality of $C_0$, so $G_1$ does not have a $2$-cut.

\end{proof}

In \cite{UHC4regmult} Fleischner proved that there are $4$-regular uniquely hamiltonian multigraphs and in fact $2k$-regular uniquely hamiltonian multigraphs with arbitrarily high degree.
Another direct consequence of Lemma~\ref{lem:gk} is the following simple generalisation:

\begin{corollary}

  For a set $M=\{d_1,\dots,d_k\}$ with $2\le d_1<d_2<\dots <d_k$ of natural numbers there is a uniquely hamiltonian  multigraph $G$ with $M_{deg}(G)=M$ if and only if $M$ contains an even number.
  In that case there are infinitely many $3$-connected uniquely hamiltonian multigraphs $G$ with $M_{deg}(G)=M$. 

\end{corollary}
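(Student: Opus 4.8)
The plan is to prove both directions of the equivalence, but only the nontrivial direction requires work: assuming a uniquely hamiltonian multigraph $G$ with $M_{deg}(G)=M$ exists, we must produce infinitely many $3$-connected ones with the same degree set, and this should fall out by reusing the simple-graph machinery already developed. First I would dispose of the easy direction and the parity obstruction: a multigraph version of Thomason's argument (or simply the observation that the symmetric-difference/rotation argument of \cite{Thomason_odd} does not use simplicity) shows that if all degrees in $M$ are odd, no uniquely hamiltonian multigraph with $M_{deg}(G)=M$ exists; alternatively one notes that a uniquely hamiltonian graph in which every vertex has odd degree would have, for each vertex, an odd number of chords, and the standard lollipop argument still applies to multigraphs. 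So it remains to handle the case $2\le d_1$, $M$ contains an even number, and to construct the infinite family of $3$-connected examples.

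The key idea for the forward direction is that Lemma~\ref{lem:gk} already gives us, for every $k$, a $3$-connected uniquely hamiltonian \emph{simple} graph $G_k$ with degree set exactly $\{3,4\}$ whose non-hamiltonian edges contain a matching of size $\ge k$ saturating all cubic vertices. The plan is then: (1) realize each even degree $d_i\in M$ by splicing in a ``multigraph plugin'' that forces that degree, analogously to how $P_{3,+2}$ and the strong H-plugins $\mathcal{P}_{d}^{str}$ were used in the proofs of Theorem~\ref{thm:alldegcubic} and Theorem~\ref{thm:set}, but now allowing multi-edges so that even small even degrees $d=2,4,6,8$ are reachable without a seed; (2) realize each odd degree by splicing at an edge joining two cubic vertices, which turns one of them into a vertex of the desired odd degree; (3) realize the required multiplicity (infinitely many graphs, and if we also wanted strong realizability, a linear number of each prescribed degree) by taking $G_{k'}$ with $k'$ large and using many copies of each plugin. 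Because every plugin is attached along a matching edge not on the hamiltonian cycle whose endpoints are cubic, and because such edges satisfy condition (ii) of Lemma~\ref{lem:op} after replacing an endpoint by a triangle (or condition (iii) if we build the plugins to be strong), uniqueness of the hamiltonian cycle is preserved by Lemma~\ref{lem:op}; and $3$-connectivity is preserved by the elementary splicing argument quoted after the definition of the $\mathcal{P}$-splice, provided each plugin is $3$-connected after adding the auxiliary vertex $v'$.

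The main obstacle — and the place where ``multigraph'' genuinely buys something over the earlier simple-graph theorems — is constructing, for \emph{every} even $d\ge 2$ (in particular $d\in\{2,4,6,8\}$, where no seed is known), a strong H-plugin that uses multi-edges and that, when spliced at a cubic-cubic edge, produces exactly one vertex of degree $d$ and otherwise only degrees in $\{3,4\}$ (or, cheaper, at most a bounded number of vertices of degree $d$). Here multigraphs make life easy: for $d=2$ one can take a path plugin (essentially subdividing); for larger even $d$ one can take a small gadget such as two vertices joined by a bundle of parallel edges together with a hamiltonian-path skeleton, e.g.\ a ``theta-like'' multigraph on a constant number of vertices with a unique hamiltonian path between two chosen vertices and no hamiltonian path through the would-be $v$, which exists because parallel edges let us inflate the degree of a single vertex at will while keeping the path structure rigid. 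Once such multigraph plugins are in hand for all even $d$, the odd degrees come for free (as in Theorem~\ref{thm:alldegcubic}: splice at a cubic-cubic matching edge to raise one endpoint to odd degree $d=$ (plugin contribution) $+2$, recursing upward), and combining plugins for several degrees along distinct matching edges of a single large $G_{k'}$ gives one $3$-connected uniquely hamiltonian multigraph with degree set exactly $M$; varying $k'$ gives infinitely many.

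Concretely, the write-up would be short: state the parity obstruction with a one-line pointer to \cite{Thomason_odd} (noting the argument is multigraph-valid), then say ``by Lemma~\ref{lem:gk} take $G_{k'}$ with $k'$ at least the number of distinct degrees in $M$; for each even $d_i\in M$ splice a multigraph strong H-plugin forcing degree $d_i$ onto a matching edge, and for each odd $d_j\in M$ splice onto another matching edge a plugin that raises a cubic endpoint to degree $d_j$; all spliced edges are cubic-cubic chords, so Lemma~\ref{lem:op}(ii)/(iii) preserves unique hamiltonicity and the splicing argument preserves $3$-connectivity; the result has degree set exactly $M$, and taking $k'\to\infty$ yields infinitely many such multigraphs.'' The only genuinely new content to spell out is the explicit family of multigraph plugins for even $d$, and even that can be described in a sentence or a single figure, since parallel edges trivialize the degree-inflation that required seeds in the simple case.
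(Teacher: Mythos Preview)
Your approach via splicing multigraph plugins is far more elaborate than what the paper does, and as written it has a real gap.

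The paper's argument is nearly a one-liner once Lemma~\ref{lem:gk} is available: take $G_{k'}$ and simply \emph{multiply} (i.e.\ add parallel copies of) the non-hamiltonian edges. Since the non-hamiltonian edges at the degree-$4$ vertices form a $2$-regular subgraph and those at the degree-$3$ vertices form a perfect matching on the cubic vertices, one can push the degree of every vertex to any prescribed value in $M$ just by choosing multiplicities: multiply the $2$-factor edges until every former degree-$4$ vertex reaches some even degree in $M$; for each remaining $d_i\in M$ pick a matching edge and multiply it until its two endpoints have degree $d_i$; finally, if $3\notin M$, multiply the leftover matching edges to move the remaining cubic vertices into $M$. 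A parallel copy of an edge not on the unique hamiltonian cycle cannot lie on any hamiltonian cycle either, so unique hamiltonicity is preserved, and adding edges can only increase connectivity, so $3$-connectivity is preserved. No plugins, no splicing, no new gadgets.

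The gap in your plan is that you never explain how to dispose of residual degree-$3$ and degree-$4$ vertices when $3,4\notin M$. By your own description, your multigraph plugins leave vertices of degree in $\{3,4\}$ behind after splicing, and the six degree-$4$ vertices of $G_{k'}$ sitting on the non-hamiltonian $2$-factor are never touched at all, since you only splice matching edges. So for a set like $M=\{6,7\}$ your construction yields a graph with degree set $\{3,4,6,7\}$, not $M$. You could patch this by multiplying the remaining non-hamiltonian edges incident to those stray vertices --- but that is precisely the paper's idea, and once you have it the entire plugin apparatus becomes superfluous.
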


\begin{proof}

   In \cite{Thomason_odd} it is shown that uniquely hamiltonian multigraphs do not exist if all degrees are odd, so we only have to prove that they do exist if an even degree is contained.

   For $2\in M$ this is even proven for simple graphs in Remark~\ref{rem:2}, so assume that all elements of $M$ are at least $3$. Taking graphs $G_{k'}$ with $k'\ge k$ from Lemma~\ref{lem:gk}
   with the matching and $2$-factor with the described properties,
  we can multiply the edges of the $2$-factor containing the $4$-regular vertices until the vertices all have an even degree contained in $M$. For each remaining degree $d_i$, we can now choose
  an edge in the matching and multiply it until it has degree $d_i$. If there are still vertices of degree $3$ left and $3\not\in M$, we can multiply the corresponding edges
  of the matching until a degree in $M$ is reached.

  \end{proof}

\section{ Computational results }\label{sec:comp}

% AMD EPYC 7552 --- 2.2 to 3.3 GHz
%  Kibitzer: AMD EPYC 9334 --- 2.7 to 3.9 GHz 

All seeds displayed in this article (and many more) were found by computer. Generating and testing all graphs with certain degrees would be too time consuming,
so two specialized programs were developed, one of them mainly to test the second one that was used for the most time consuming runs.
The programs were designed to construct seeds where the degree sets with limits for the numbers of vertices with
each degree as well as the degrees of $s$ and $t$ are given as a parameter.  The programs start with a hamiltonian path $s=1,2,\dots ,n-1,t=n$ and then add edges in a
way that given degree restrictions are respected and that no second hamiltonian path from $1$ to $n$ is introduced.

The smallest $\{4,10\}$-seed has only $10$ vertices, $\{\deg(s),\deg(t)\}=\{2,3\}$, and 
the maximum degree is $4$ -- see Figure~\ref{fig:min10seed}.
Though  $\{\deg(s),\deg(t)\}=\{2,3\}$ and maximum degree $4$
in the seed seems a good choice as it implies the smallest possible number of edges in a seed with a given number of vertices, for $k<10$ no $\{4,k\}$-seeds with this structure exist up to
$|V|\le 21$. In fact for odd $k$, no such seeds can exist as they would need an odd number of vertices of the only odd degree, which is $3$.

The smallest $\{4,9\}$-seeds have $14$ vertices and $\{\deg(s),\deg(t)\}=\{3,7\}$ or $\{\deg(s),\deg(t)\}=\{3,8\}$ and
the smallest $\{4,8\}$-seeds have $14$ vertices and $\{\deg(s),\deg(t)\}=\{3,7\}$ or $\{\deg(s),\deg(t)\}=\{7,7\}$. Except for $s,t$, also for these seeds the maximum degree is $4$.
In fact one of the $\{4,8\}$-seeds is also a $\{4,9\}$-seed. It is given in Figure~\ref{fig:8and9seed}.

\begin{figure}[tb]
	\centering
        \resizebox{0.43\textwidth}{0.43\textwidth}
        {
          \begin{tikzpicture}[scale=0.065]
\def\vertexscale{1.50}
\def\labelscale{1.70}
\node [circle,blue,draw,scale=\vertexscale] (1) at (30.26301,49.30159) {s=1};
\node [circle,black,draw,scale=\vertexscale] (2) at (53.32394,-1.86101) {2};
\node [circle,blue,draw,scale=\vertexscale] (3) at (27.86973,-48.78260) {3};
\node [circle,blue,draw,scale=\vertexscale] (4) at (-16.73279,-54.40718) {4};
\node [circle,blue,draw,scale=\vertexscale] (5) at (-18.39832,-22.81958) {5};
\node [circle,blue,draw,scale=\vertexscale] (6) at (8.28683,1.13455) {6};
\node [circle,blue,draw,scale=\vertexscale] (7) at (-0.76292,36.06687) {7};
\node [circle,blue,draw,scale=\vertexscale] (8) at (-30.66531,48.04957) {8};
\node [circle,blue,draw,scale=\vertexscale] (9) at (-60.68963,6.59584) {9};
\node [circle,teal,draw,scale=\vertexscale] (10) at (-40.81150,-11.66574) {t=10};
\tkzDefPoint(-100.00000,-0.00000){11}
\tkzDefPoint(-70.71068,-70.71068){12}
\tkzDefPoint(-25.88190,-96.59258){13}
\tkzDefPoint(25.88190,-96.59258){14}
\tkzDefPoint(100.00000,0.00000){15}
\tkzDefPoint(-25.88190,96.59258){16}
\tkzDefPoint(25.88190,96.59258){17}
\tkzDefPoint(-70.71068,70.71068){18}
\tkzDefPoint(70.71068,70.71068){19}
\tkzDefPoint(70.71068,-70.71068){20}
\draw [black] (1) to (17);
\node [draw=none,fill=none,scale=\labelscale] () at (27.17600,101.42221) {3};
\draw [black] (1) to (2);
\draw [black] (1) to (7);
\draw [black] (2) to (15);
\node [draw=none,fill=none,scale=\labelscale] () at (105.00000,0.00000) {9};
\draw [black] (2) to (3);
\draw [black] (2) to (6);
\draw [black] (3) to (14);
\node [draw=none,fill=none,scale=\labelscale] () at (27.17600,-101.42221) {1};
\draw [black] (3) to (4);
\draw [black] (4) to (5);
\draw [black] (4) to (13);
\node [draw=none,fill=none,scale=\labelscale] () at (-27.17600,-101.42221) {8};
\draw [black] (5) to (10);
\draw [black] (5) to (6);
\draw [black] (6) to (7);
\draw [black] (7) to (8);
\draw [black] (8) to (9);
\draw [black] (8) to (16);
\node [draw=none,fill=none,scale=\labelscale] () at (-27.17600,101.42221) {4};
\draw [black] (9) to (10);
\draw [black] (9) to (11);
\node [draw=none,fill=none,scale=\labelscale] () at (-105.00000,-0.00000) {2};
\tkzDefPoint(-68.55786,72.79986){A}
\tkzDefPoint(68.55786,72.79986){B}
\tkzDefPoint(0.0,0.0){C}
\tkzDrawArc[<-,line width=0.9mm, red](C,B)(A)
\tkzDefPoint(72.79986,68.55786){A}
\tkzDefPoint(72.79986,-68.55786){B}
\tkzDefPoint(0.0,0.0){C}
\tkzDrawArc[<-,line width=0.9mm, blue](C,B)(A)
\tkzDefPoint(68.55786,-72.79986){A}
\tkzDefPoint(-68.55786,-72.79986){B}
\tkzDefPoint(0.0,0.0){C}
\tkzDrawArc[->,line width=0.9mm, red](C,B)(A)
\tkzDefPoint(-72.79986,-68.55786){A}
\tkzDefPoint(-72.79986,68.55786){B}
\tkzDefPoint(0.0,0.0){C}
\tkzDrawArc[->,line width=0.9mm, blue](C,B)(A)
\node [black,circle,draw,fill=white,scale=0.75,line width=1mm] (12) at (-70.71068,-70.71068) {};
\node [black,circle,draw,fill=white,scale=0.75,line width=1mm] (18) at (-70.71068,70.71068) {};
\node [black,circle,draw,fill=white,scale=0.75,line width=1mm] (19) at (70.71068,70.71068) {};
\node [black,circle,draw,fill=white,scale=0.75,line width=1mm] (20) at (70.71068,-70.71068) {};
\end{tikzpicture}
          }
	\caption{A graph with a unique hamiltonian path from $s=1$ to $t=10$. The set $V'=\{3,4,5,\dots ,10\}$ shows that it is a $\{4,10\}$-seed. It is the unique smallest  $\{4,10\}$-seed. }
	\label{fig:min10seed}
\end{figure}

\begin{figure}[tb]
	\centering
        \resizebox{0.55\textwidth}{0.55\textwidth}
        {
          \begin{tikzpicture}[scale=0.065]
\def\vertexscale{1.00}
\def\labelscale{1.40}
\node [circle,red,draw,scale=\vertexscale] (1) at (7.86037,47.14605) {s=1};
\node [circle,black,draw,scale=\vertexscale] (2) at (28.95763,0.85338) {2};
\node [circle,black,draw,scale=\vertexscale] (3) at (-16.80373,3.73974) {3};
\node [circle,blue,draw,scale=\vertexscale] (4) at (-17.23258,-25.97814) {4};
\node [circle,blue,draw,scale=\vertexscale] (5) at (-47.07586,-27.42150) {5};
\node [circle,blue,draw,scale=\vertexscale] (6) at (59.69440,12.17010) {6};
\node [circle,black,draw,scale=\vertexscale] (7) at (45.81260,48.78440) {7};
\node [circle,blue,draw,scale=\vertexscale] (8) at (20.99309,-52.13774) {8};
\node [circle,blue,draw,scale=\vertexscale] (9) at (47.93027,-30.34245) {9};
\node [circle,black,draw,scale=\vertexscale] (10) at (-37.31577,-58.94222) {10};
\node [circle,black,draw,scale=\vertexscale] (11) at (-30.04170,66.61528) {11};
\node [circle,blue,draw,scale=\vertexscale] (12) at (-18.65141,52.15085) {12};
\node [circle,black,draw,scale=\vertexscale] (13) at (-33.79733,42.69617) {13};
\node [circle,blue,draw,scale=\vertexscale] (14) at (-53.49716,26.21880) {t=14};
\tkzDefPoint(-92.38795,38.26834){15}
\tkzDefPoint(-70.71068,-70.71068){16}
\tkzDefPoint(-100.00000,-0.00000){17}
\tkzDefPoint(-92.38795,-38.26834){18}
\tkzDefPoint(-38.26834,-92.38795){19}
\tkzDefPoint(0.00000,-100.00000){20}
\tkzDefPoint(38.26834,-92.38795){21}
\tkzDefPoint(92.38795,38.26834){22}
\tkzDefPoint(100.00000,0.00000){23}
\tkzDefPoint(92.38795,-38.26834){24}
\tkzDefPoint(-38.26834,92.38795){25}
\tkzDefPoint(-0.00000,100.00000){26}
\tkzDefPoint(38.26834,92.38795){27}
\tkzDefPoint(-70.71068,70.71068){28}
\tkzDefPoint(70.71068,70.71068){29}
\tkzDefPoint(70.71068,-70.71068){30}
\draw [black] (1) to (3);
\draw [black] (1) to (13);
\draw [black] (1) to (12);
\draw [black] (1) to (11);
\draw [black] (1) to (26);
\node [draw=none,fill=none,scale=\labelscale] () at (-0.00000,105.00000) {10};
\draw [black] (1) to (7);
\draw [black] (1) to (2);
\draw [black] (2) to (6);
\draw [black] (2) to (9);
\draw [black] (2) to (3);
\draw [black] (3) to (4);
\draw [black] (3) to (14);
\draw [black] (4) to (8);
\draw [black] (4) to (5);
\draw [black] (5) to (10);
\draw [black] (5) to (17);
\node [draw=none,fill=none,scale=\labelscale] () at (-105.00000,-0.00000) {6};
\draw [black] (6) to (7);
\draw [black] (6) to (23);
\node [draw=none,fill=none,scale=\labelscale] () at (105.00000,0.00000) {5};
\draw [black] (7) to (27);
\node [draw=none,fill=none,scale=\labelscale] () at (40.18176,97.00735) {8};
\draw [black] (7) to (22);
\node [draw=none,fill=none,scale=\labelscale] () at (97.00735,40.18176) {14};
\draw [black] (8) to (9);
\draw [black] (8) to (21);
\node [draw=none,fill=none,scale=\labelscale] () at (40.18176,-97.00735) {7};
\draw [black] (9) to (24);
\node [draw=none,fill=none,scale=\labelscale] () at (97.00735,-40.18176) {10};
\draw [black] (10) to (19);
\node [draw=none,fill=none,scale=\labelscale] () at (-40.18176,-97.00735) {11};
\draw [black] (10) to (18);
\node [draw=none,fill=none,scale=\labelscale] () at (-97.00735,-40.18176) {9};
\draw [black] (10) to (20);
\node [draw=none,fill=none,scale=\labelscale] () at (0.00000,-105.00000) {1};
\draw [black] (11) to (13);
\draw [black] (11) to (25);
\node [draw=none,fill=none,scale=\labelscale] () at (-40.18176,97.00735) {10};
\draw [black] (11) to (12);
\draw [black] (12) to (13);
\draw [black] (13) to (14);
\draw [black] (14) to (15);
\node [draw=none,fill=none,scale=\labelscale] () at (-97.00735,40.18176) {7};
\tkzDefPoint(-68.55786,72.79986){A}
\tkzDefPoint(68.55786,72.79986){B}
\tkzDefPoint(0.0,0.0){C}
\tkzDrawArc[<-,line width=0.9mm, red](C,B)(A)
\tkzDefPoint(72.79986,68.55786){A}
\tkzDefPoint(72.79986,-68.55786){B}
\tkzDefPoint(0.0,0.0){C}
\tkzDrawArc[<-,line width=0.9mm, blue](C,B)(A)
\tkzDefPoint(68.55786,-72.79986){A}
\tkzDefPoint(-68.55786,-72.79986){B}
\tkzDefPoint(0.0,0.0){C}
\tkzDrawArc[->,line width=0.9mm, red](C,B)(A)
\tkzDefPoint(-72.79986,-68.55786){A}
\tkzDefPoint(-72.79986,68.55786){B}
\tkzDefPoint(0.0,0.0){C}
\tkzDrawArc[->,line width=0.9mm, blue](C,B)(A)
\node [black,circle,draw,fill=white,scale=0.75,line width=1mm] (16) at (-70.71068,-70.71068) {};
\node [black,circle,draw,fill=white,scale=0.75,line width=1mm] (28) at (-70.71068,70.71068) {};
\node [black,circle,draw,fill=white,scale=0.75,line width=1mm] (29) at (70.71068,70.71068) {};
\node [black,circle,draw,fill=white,scale=0.75,line width=1mm] (30) at (70.71068,-70.71068) {};
\end{tikzpicture}
          }
	\caption{A graph with a unique hamiltonian path from $s=1$ to $t=14$. The set $V'=\{4,5,6,8,9,12\}$ shows that it is a $\{4,8\}$-seed and the set $V'=\{1,4,5,6,8,9,12\}$ shows that it is a $\{4,9\}$-seed. There are no smaller  $\{4,8\}$- or $\{4,9\}$-seeds. }
	\label{fig:8and9seed}
\end{figure}

Unfortunately the computation of seeds is very time consuming. Testing all possible sets of degrees of $\{4,7\}$-seeds on $15$ vertices already took about $100$ days of CPU time
on an AMD EPYC 7552 running with 2.2 to 3.3 GHz with full load.
The possible presence of vertices with degree larger than $4$ {\em inside} the seed -- that is: at a vertex different from $s$ and $t$ -- has a large impact on the time consumption.
Not allowing vertices with degree larger than $4$ inside the seed, the search for $\{4,7\}$-seeds on $15$ vertices needed about $37$ minutes on the same machine.
As for the smallest $\{4,k\}$-seeds for $k\in \{8,9,10\}$, no such vertices were present, for $k\le 7$ we focused on seeds without vertices with degree larger than $4$ inside.

To be exact: for $k\in\{6,7\}$ we did a complete search only up to $15$ vertices. No $\{4,6\}$- or $\{4,7\}$-seeds exist for these vertex numbers.
For larger vertex numbers we focused on seeds without interior vertices with large degree. The smallest such $7$-seeds have $18$ vertices -- an example is given in Figure~\ref{fig:7seed}.
For $6$-seeds the existence of seeds without internal vertices of large degree was only checked up to $17$ vertices. No such $6$-seeds were found. For $18,19$ and $20$ vertices we restricted the search
to $3$ cases: for  no internal vertices with degree larger than $4$ the cases $\{\deg(s),\deg(t)\}=\{5,5\}$ and $\{\deg(s),\deg(t)\}=\{3,5\}$ were checked.
For one internal vertex with degree $5$, the case $\{\deg(s),\deg(t)\}=\{3,3\}$ with $5$ vertices of degree $3$, one (internal)
vertex of degree $5$ and the rest of degree $4$ was checked. No such seeds were found and the total CPU time needed was about $18$ years on an AMD EPYC 7532.

Even for carefully designed and implemented algorithms independent tests are necessary.
As runs without any output are not very good tests for the programs, the two programs were also compared when generating $10$-seeds and $12$-seeds with $\{\deg(s),\deg(t)\}=\{2,3\}$ and no internal vertices with degree larger than $4$.
The output of the two programs was compared for their number and for isomorphism up to $20$ vertices.  For $10$-seeds there were in total $4.689$ non-isomorphic seeds and
for $12$-seeds there were in total $1.414.640$ non-isomorphic seeds. 
In addition $k$-seeds with $\{\deg(s),\deg(t)\}=\{3,k-1\}$, no internal vertices of degree larger than $4$, and $6\le k\le 10$ on up to $16$ vertices (in total $4.907$ seeds) were compared.
For seeds, {\em isomorphism} means that the two endpoints of the hamiltonian path are
marked vertices and are distinguished from the other vertices, so some seeds that are non-isomorphic as seeds can be isomorphic as graphs. There was complete agreement.
The program used for the larger runs can be obtained from the authors.

\section{ Final remarks}

\begin{figure}[tb]
	\centering
	\includegraphics[width=0.6\textwidth]{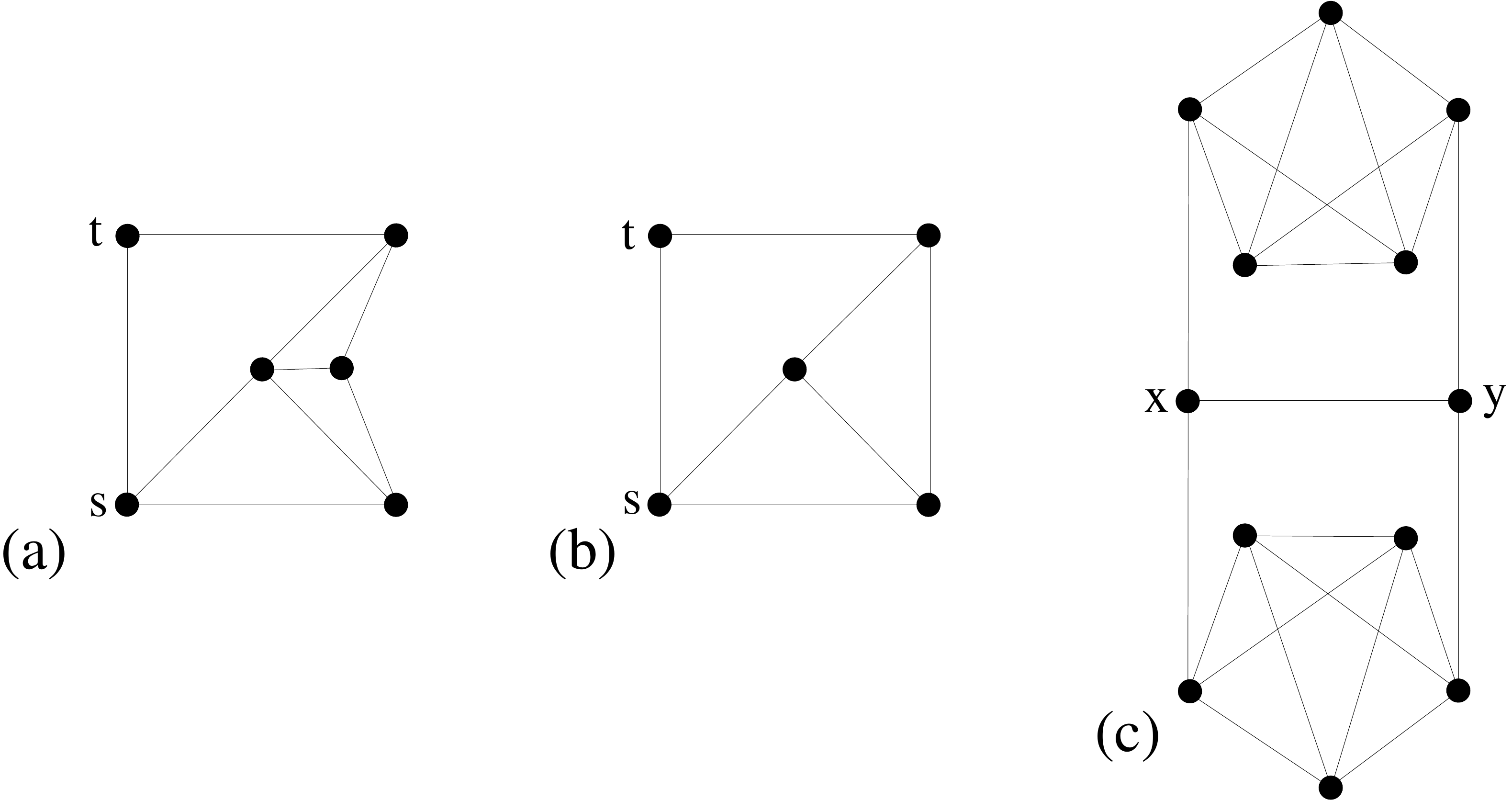}
	\caption{The splicing operation for more than one hamiltonian cycle with a {\em generalized 4-seed} and a {\em generalized 6-seed}.}
	\label{fig:count}
\end{figure}

In this article we are interested only in uniquely hamiltonian graphs. Nevertheless the method of splicing can also be useful when constructing graphs with few
hamiltonian cycles. We will only give a short sketch of the possibilities. We will not formally state results, as we do not give formal proofs. The following statements should
be considered as preliminary as long as no proofs are given somewhere.

If we allow $n_s$ hamiltonian paths from $s$ to $t$ in a seed and $n_G$ hamiltonian cycles in a graph $G$ -- none of them containing the edge $e$ of $G$ -- then
with the otherwise same prerequisites of Lemma~\ref{lem:op}, the proof can be repeated, this time showing that the result after splicing has $n_s\cdot n_G$
hamiltonian cycles. This implies that for any set $M$ of natural numbers with minimum $4$ there is a constant $C$ and an infinite series of graphs
with degree set $M$ and at most $C$ hamiltonian cycles. In fact there is also one constant working as an upper bound for all sets $M$.
The constants we get from our proof that used $P^-$ are nevertheless very large and far worse for the 4-regular case than in
\cite{fewlongcyc}. For better constants one has to search for starting graphs that need fewer splicing operations, but can have more than one hamiltonian
cycle. An example is the construction in \cite{fewlongcyc} proving that there are infinitely many (2-connected) $4$-regular graphs with $144$ hamiltonian cycles. It was found
and proven in a completely different way, but can be interpreted making use of splicing:

The graph in Figure~\ref{fig:count}(c) has $36$ hamiltonian cycles -- none of them containing $\{x,y\}$. Furthermore removing $y$, the graph is
non-hamiltonian. The {\em generalized 4-seed} (that is: allowing more than one hamiltonian path from $s$ to $t$) in Figure~\ref{fig:count}(a) has $4$ hamiltonian
paths from $s$ to $t$, so with plugins obtained from it and its extensions, the results of splicing $\{x,y\}$ have $144$ hamiltonian cycles.  The 
  generalized seed in Figure~\ref{fig:count}(b) has $2$ hamiltonian paths from $s$ to $t$ and would give one vertex of degree $6$, so splicing $\{x,y\}$ would
give $72$ hamiltonian cycles for the degree set $M=\{4,6\}$ and replacing a vertex of degree $3$ by a triangle also for $M=\{4,8\}$.

\bigskip

All graphs explicitly given in the previous sections can be inspected at and downloaded from the database {\em House of Graphs} \cite{HoG2}. They can be found by
searching for the keyword \verb+UHG_degree_sequence+.

All properties about small graphs stated here have been checked by computer, but can -- though sometimes tedious -- be confirmed by hand.

\bigskip

\section{ Acknowledgement}

A large part of the computational resources (Stevin Supercomputer Infrastructure) and services used in this work were provided by the VSC (Flemish Supercomputer Center), funded by Ghent University, FWO and the Flemish Government – department EWI.

\bibliographystyle{plain}
%\bibliography{/home/gbrinkma/schreib/literatur}

\end{document}